\def\newthm#1#2{\newtheorem{#1}[dummy]{#2}%
  \expandafter\def\csname#2\endcsname##1{\hyperref[#1:##1]{#2~\ref*{#1:##1}}}}
\theoremstyle{definition}
\def\namedlabel#1#2{\begingroup
    #2%
    \def\@currentlabel{#2}%
    \phantomsection\label{#1}\endgroup
}
\newcommand{\Section}[1]{\hyperref[sec:#1]{Section~\ref*{sec:#1}}}
\newcommand{\Table}[1]{\hyperref[tab:#1]{Table~\ref*{tab:#1}}}
\newcommand{\eqn}[1]{\hyperref[eqn:#1]{(\ref*{eqn:#1})}}
\newcommand{\Figure}[1]{\hyperref[fig:#1]{Figure~\ref*{fig:#1}}}
\DeclareMathOperator{\Gr}{Gr}
\DeclareMathOperator{\LG}{LG}
\DeclareMathOperator{\OG}{OG}
\DeclareMathOperator{\Pic}{Pic}
\DeclareMathOperator{\ch}{\ch}
\newcommand{\bP}{{\mathbb P}}
\renewcommand{\P}{{\mathbb P}}
\newcommand{\cP}{{\mathcal P}}
\newcommand{\cD}{{\mathcal D}}
\newcommand{\ignore}[1]{}
      \DeclarePairedDelimiter{\bracks}{\lbrack}{\rbrack}
\begin{document}

\title[The Isomorphism Problem for cominuscule Schubert Varieties]
{The Isomorphism Problem for cominuscule Schubert Varieties}

\date{\today}

\title[Cominuscule Schubert varieties]{The isomorphism problem for cominuscule Schubert varieties}

\author{Edward Richmond}
\address{Department of Mathematics, Oklahoma State University, Stillwater, OK, USA}
\email{edward.richmond@okstate.edu}
\author{Mihail Țarigradschi}
\address{Department of Mathematics, Rutgers University, Piscataway, NJ, USA}\email{mt994@math.rutgers.edu}
\author{Weihong Xu}
\address{Department of Mathematics, Virginia Tech, Blacksburg, VA, USA}
\email{weihong@vt.edu}

\subjclass[2010]{ 	14M15, 05E14, 05E10}

\begin{abstract}
        Cominuscule flag varieties generalize Grassmannians to other Lie types. Schubert varieties in cominuscule flag varieties are indexed by posets of roots labeled long/short. These labeled posets generalize Young diagrams. We prove that Schubert varieties in potentially different cominuscule flag varieties are isomorphic as varieties if and only if their corresponding labeled posets are isomorphic, generalizing the classification of Grassmannian Schubert varieties using Young diagrams by the last two authors. Our proof is type-independent.
\end{abstract}

\maketitle

\section{Introduction}

Cominuscule flag varieties correspond to algebraic varieties that admit the structure of a compact Hermitian symmetric space and have been studied extensively due their shared properties with Grassmannians \cites{BP99,Pe09,ThYo09,BCMP18,MR4099894,buch2022positivity}. These varieties come in five infinite families and two exceptional types and are determined by a pair \((\cD,\gamma)\) of a Dynkin diagram \(\cD\) of a reductive Lie group and a cominuscule simple root \(\gamma\). See \Table{comin} for a classification of cominuscule flag varieties. Let \(X\) denote the cominuscule flag variety corresponding to \((\cD,\gamma)\) and \(R\) denote the root system of the Dynkin diagram \(\cD\).  Set \(\cP_X\coloneqq \{\alpha\in R: \alpha\geq\gamma\}\) with the partial order $\alpha\leq \beta$ if $\beta-\alpha$ is a non-negative sum of simple roots, and give \(\cP_X\) a labeling of long/short roots. By~\cite{buch2016k}*{Theorem 2.4}, Schubert varieties in \(X\) are indexed by lower order ideals in \(\cP_X\), generalizing the fact that Schubert varieties in a Grassmannian are indexed by Young diagrams.

Our main result \Theorem{main} is a combinatorial criterion for distinguishing isomorphism classes of Schubert varieties coming from cominuscule flag varieties.

\begin{thm}\label{thm:main}
        Let $X_\lambda\subseteq X$ and $Y_{\mu}\subseteq Y$ be cominuscule Schubert varieties indexed by lower order ideals \(\lambda\subseteq\cP_X\) and \(\mu\subseteq\cP_Y\), respectively.  Then $X_\lambda$ and $Y_{\mu}$ are algebraically isomorphic if and only if $\lambda$ and $\mu$ are isomorphic as labeled posets.
\end{thm}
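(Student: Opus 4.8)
The plan is to prove the two implications separately, treating the forward direction (isomorphic labeled posets give isomorphic varieties) as the constructive one and the backward direction (isomorphic varieties force isomorphic labeled posets) as the substantive one. For the forward direction I would exploit the defining feature of the cominuscule setting: the unipotent radical of the parabolic $P$ is abelian, so the opposite big cell of $X_\lambda$ is an affine space $\mathbb{A}^{\lambda}$ with coordinates indexed by the elements of $\lambda$, and $X_\lambda$ is glued from such cells --- one for each $T$-fixed point of $X_\lambda$ --- with transition maps controlled entirely by the order on $\lambda$ and the long/short labels of the participating roots. A labeled-poset isomorphism $\lambda \xrightarrow{\sim} \mu$ then matches cells with cells, respects the gluing, and assembles into a biregular isomorphism $X_\lambda \to Y_{\mu}$. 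A cleaner packaging is a \emph{stability} statement: enlarging the ambient cominuscule variety while keeping $\lambda$ a lower order ideal does not change $X_\lambda$ up to isomorphism, so one reduces every labeled poset to a canonical realization and compares there. The only delicate point is the bookkeeping of the labels in the transition maps across different types, but this is forced by the abelian (cominuscule) structure.

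For the backward direction I would argue by induction on $\dim X_\lambda = |\lambda|$ using only \emph{biregular} invariants, since an abstract isomorphism need not respect the torus actions. The base case is the smooth one: a smooth cominuscule Schubert variety is again a cominuscule flag variety, and its connected automorphism group --- equivalently its marked Dynkin diagram --- recovers the labeled poset, with the long/short marking read off from the type of the acting group; this is exactly where coincident unlabeled posets, such as those of symplectic and orthogonal origin, are separated. For the inductive step I would produce an \emph{intrinsic} morphism out of $X_\lambda$ --- an extremal contraction, or the map attached to a canonical sub-lattice of $\Pic(X_\lambda)$ --- realizing $X_\lambda$ as a fibration whose base and fiber are lower-dimensional cominuscule Schubert varieties, while the singular locus $\Sing(X_\lambda)$, itself a biregular invariant, furnishes a second intrinsic handle. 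Induction recovers the labeled posets of these pieces, and a combinatorial reconstruction lemma reassembles $\lambda$ as a labeled poset from them.

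I expect the main obstacle to be the backward direction's two intertwined difficulties. First, arranging a \emph{torus-free}, intrinsic recursion: an abstract isomorphism supplies no Schubert structure for free, so every inductive handle (contraction, sub-linear-system, singular locus) must be defined purely from the variety and then matched against the combinatorics of $\lambda$. Second, detecting the long/short labels, which is the genuinely new feature beyond the simply-laced Grassmannian precedent: two cominuscule labeled posets can be isomorphic as bare posets yet carry different markings --- the shifted-staircase posets arising from Lagrangian versus orthogonal Grassmannians are the prototype --- and the theorem asserts that the variety still sees the difference. I would try to pin the labels down through a finer local-geometric invariant, such as the tangent-space jumps along the most singular fixed locus, the Hilbert--Samuel multiplicity there, or the degrees of minimal rational curves (the variety of minimal rational tangents), each of which distinguishes a long-root from a short-root direction. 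Verifying that this label invariant is simultaneously intrinsic and uniformly computable from $\lambda$ across all types, and proving that the combinatorial reconstruction lemma is injective on \emph{labeled} posets so that the induction closes, is where the bulk of the work lies.
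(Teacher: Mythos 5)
Your proposal diverges from the paper in both directions, and the substantive one --- recovering the labeled poset from the abstract variety --- contains gaps that I do not think can be repaired along the lines you sketch. The paper's argument there is direct and non-inductive: by a theorem of Fulton--MacPherson--Sottile--Sturmfels (\Lemma{schub_classes}), the Schubert classes in $A_*(X_\lambda)$ are exactly the minimal elements on the extremal rays of the effective cone, hence intrinsic; $\Pic(X_\lambda)\cong\Z$ with a unique effective generator (\Lemma{pic}); and the Chevalley formula (\Lemma{chevalley}) for intersecting with that generator recovers both the Bruhat interval below $w_\lambda$ (whose join-irreducible elements give the poset $\lambda$) and the long/short labels, since the coefficient on the unique term below a join-irreducible class is $1$ for a long root and $2$ for a short root. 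Your inductive scheme, by contrast, leans on structures that do not exist: since $\Pic(X_\lambda)$ has rank one there is no ``canonical sub-lattice'' producing an extremal contraction, and cominuscule Schubert varieties do not in general admit intrinsic fibrations over lower-dimensional cominuscule Schubert varieties. The ``combinatorial reconstruction lemma'' that would reassemble $\lambda$ from the singular locus and such a fibration is never formulated, and your proposed label-detecting invariants (Hilbert--Samuel multiplicities, VMRT degrees) are plausible but would each require a uniform or type-by-type verification that the proposal does not supply. So the induction does not close.

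For the other direction, your cell-gluing idea assumes exactly what must be proved: that the transition functions between the opposite big cells of $X_\lambda$ are ``controlled entirely by the order on $\lambda$ and the labels.'' Nothing in the proposal substantiates this, and a labeled-poset bijection of cells does not by itself glue to a morphism. The paper instead shows that the labeled poset $\lambda$ determines the \emph{Dynkin diagram} of a minimal ambient cominuscule flag variety: the map $\delta:\cP_X\to\Delta_X$ restricts to a bijection from the union of Dynkin chains in $\cP_X$ onto the simple roots (\Lemma{biject}), chains map to paths starting at the cominuscule node, and $\delta$ preserves root lengths, so a labeled poset isomorphism $\lambda\to\mu$ induces a Dynkin diagram isomorphism $\cD_X^\lambda\to\cD_Y^\mu$ (\Proposition{group_restriction}); combined with the Richmond--Slofstra restriction lemma (\Lemma{isom}) this yields the variety isomorphism. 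Your ``stability''/``canonical realization'' remark gestures at this reduction, but the construction of the canonical realization from the labeled poset alone --- the Dynkin-chain argument --- is precisely the missing key idea.
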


For illustrative examples of \Theorem{main}, see \Section{examples}.

Since Grassmannians are cominuscule flag varieties, \Theorem{main} extends the work of Țarigradschi and Xu in \cite{Tarigradschi-Xu22}, where they prove two Grassmannian Schubert varieties are isomorphic if and only if their Young diagrams are the same or the transpose of each other. Other related works include Richmond and Slofstra's characterization of the isomorphism classes of Schubert varieties coming from complete flag varieties in \cite{Richmond-Slofstra21} using Cartan equivalence. However, they also note that Cartan equivalence is neither necessary nor sufficient to distinguish Schubert varieties in partial flag varieties. A class of smooth Schubert varieties in type \(A\) partial flag varieties are classified by Develin, Martin, and Reiner in \cite{develin_martin_reiner_2007}. Yet many Schubert varieties are singular, with the first example being the Schubert divisor in the Grassmannian \(\Gr(2,4)\).

We discuss preliminaries in \Section{prelim}, and then in \Section{proof}, we prove \Theorem{main} and illustrate it with examples. Our proof is type-independent and employs several new techniques. To show that the labeled poset \(\lambda\) depends only on the isomorphism class of the Schubert variety \(X_\lambda\), we construct it from the effective cone in the Chow group of \(X_\lambda\) and intersection products of classes in this cone with the unique effective generator of the Picard group of the variety. To prove the converse, we embed each Schubert variety \(X_\lambda\) in a ``minimal" cominuscule flag variety uniquely determined by the labeled poset \(\lambda\).  Assuming that the labeled posets \(\lambda\subseteq\cP_X\) and \(\mu\subseteq\cP_Y\) are isomorphic, we construct an explicit isomorphism between the Dynkin diagrams of the ``minimal'' cominuscule flag varieties determined by \(\lambda\) and \(\mu\).  The corresponding flag variety isomorphism identifies the Schubert varieties \(X_\lambda\) and \(Y_\mu\).

\begin{table}[!ht]
  \caption{Cominuscule flag varieties. Cominuscule roots are denoted by the filled-in circles. This table is a modification of \cite[Table 1]{buch2016k}.}\label{table:comin}
  \centering
  \begin{tabular}{|l|l|}
    \hline
                                                                                                                                                  & \vspace{-3mm}                  \\
    \multirow{3}{*}{\ \begin{dynkinDiagram}[text style/.style={scale=0.9},labels={1,2,n-1,n},scale=2]
                          {A}{**.**}
                        \end{dynkinDiagram}}                                & Type \(A_n\) (\(n\geq 1\)):                                                                  \\ & \(A_n/P_m = \Gr(m,n+1)\) \\ & Grassmannian of type A.\\
                                                                                                                                                  & \vspace{-3mm}                  \\
    \hline
                                                                                                                                                  & \vspace{-3mm}                  \\
    \multirow{2}{*}{\ \begin{dynkinDiagram}[text style/.style={scale=0.9},labels={1,2,n-2,n-1,n},scale=2]
                          {B}{*o.ooo}
                        \end{dynkinDiagram}}                                & Type \(B_n\) (\(n\geq 2\)):                                                                  \\ & $B_n/P_1 = Q^{2n-1}$ \ Odd quadric. \\
                                                                                                                                                  & \vspace{-3mm}                  \\
    \hline
                                                                                                                                                  & \vspace{-3mm}                  \\
    \multirow{3}{*}{\ \begin{dynkinDiagram}[text style/.style={scale=0.9},labels={1,2,n-2,n-1,n},scale=2]
                          {C}{oo.oo*}
                        \end{dynkinDiagram}}                                & Type \(C_n\) (\(n\geq 3\)):                                                                  \\ & $C_n/P_n = \LG(n,2n)$ \\
                                                                                                                                                  & Lagrangian Grassmannian.       \\
                                                                                                                                                  & \vspace{-3mm}                  \\
    \hline
                                                                                                                                                  & \vspace{-3mm}                  \\
    \multirow{6}{*}{\ \begin{dynkinDiagram}[text style/.style={scale=0.9},labels={1,2,n-3,n-2,n-1,n},label directions={,,,right,,},scale=2]
                          {D}{*o.oo**}
                        \end{dynkinDiagram}} &                                   \\ & Type \(D_n\) (\(n\geq 4\)):\\ &  $D_n/P_1 = Q^{2n-2}$ \ Even quadric.\\ & $D_n/P_{n-1} \cong D_n/P_n = \OG(n,2n)$ \ \\
                                                                                                                                                  & Max.\ orthogonal Grassmannian. \\
                                                                                                                                                  &                                \\
                                                                                                                                                  & \vspace{-3mm}                  \\
    \hline
                                                                                                                                                  & \vspace{-3mm}                  \\
    \multirow{5}{*}{\ \begin{dynkinDiagram}[text style/.style={scale=0.9},labels={1,2,3,4,5,6},scale=2]
                          {E}{*oooo*}
                        \end{dynkinDiagram}}                              &                                                                \\ & Type \(E_6\): \\ & $E_6/P_1 \cong E_6/P_6$ \\ &  Cayley plane. \\ & \\
                                                                                                                                                  & \vspace{-3mm}                  \\
    \hline
                                                                                                                                                  & \vspace{-3mm}                  \\
    \multirow{5}{*}{\ \begin{dynkinDiagram}[text style/.style={scale=0.9},labels={1,2,3,4,5,6,7},scale=2]
                          {E}{oooooo*}
                        \end{dynkinDiagram}}                              &                                                                \\ & Type \(E_7\): \\ & $E_7/P_7$ \\ & Freudenthal variety.\\ & \\
    \hline
  \end{tabular}
  \label{tab:comin}
\end{table}

\def\vmm#1{\vspace{#1mm}}
\begin{table}
  \caption{The labeled poset \(\cP_X\) for a cominuscule flag variety \(X\). Each element in \(\cP_X\) is drawn as a box, and boxes decorated with an ``\(s\)'' correspond to short roots.  The partial order on boxes is given by $\alpha \leq \beta$ if and only if $\alpha$ is weakly north-west of $\beta$. This table is a modification of \cite[Table 1]{buch2022positivity}.}\label{table:poset}

  \begin{tabular}{|c|}
    \hline                                                                                                                                                    \\
    Grassmannian $\Gr(3,7)$ of type A                                                                                                                         \\
    \begin{dynkinDiagram}[text style/.style={scale=0.9},labels={1,2,3,4,5,6},scale=2]{A}{oo*ooo}\end{dynkinDiagram}                                \\
    $\tableau{12}{
    [a]   & [a]    & [a]    & [a]                                                                                                                             \\
    [a]   & [a]    & [a]    & [a]                                                                                                                             \\
    [a]   & [a]    & [a]    & [a]
    }$                                                                                                                                                        \\  \\
    \hline                                                                                                                                                    \\

    Odd quadric $Q^{11} \subset \bP^{12}$                                                                                                                     \\
    \multirow{2}{*}{\ \begin{dynkinDiagram}[text style/.style={scale=0.9},labels={1,2,3,4,5,6},scale=2]
                          {B}{*ooooo}
                        \end{dynkinDiagram}}                                             \\ \\
    $\tableau{12}{
    [a]{} & [a]{}  & [a]    & [a]    & [a]    & [a]{s} & [a] & [a]{} & [a]{} & [a]{} & [a]{}
    }$                                                                                                                                                        \\ \\
    \hline                                                                                                                                                    \\
    Lagrangian Grassmannian $\LG(6,12)$                                                                                                                       \\
    \multirow{2}{*}{\ \begin{dynkinDiagram}[text style/.style={scale=0.9},labels={1,2,3,4,5,6},scale=2]
                          {C}{ooooo*}
                        \end{dynkinDiagram}}                                             \\ \\
    $\tableau{12}{
    [a]{} & [a]{s} & [a]{s} & [a]{s} & [a]{s} & [a]{s}                                                                                                        \\
          & [a]{}  & [a]{s} & [a]{s} & [a]{s} & [a]{s}                                                                                                        \\
          &        & [a]{}  & [a]{s} & [a]{s} & [a]{s}                                                                                                        \\
          &        &        & [a]{}  & [a]{s} & [a]{s}                                                                                                        \\
          &        &        &        & [a]{}  & [a]{s}                                                                                                        \\
          &        &        &        &        & [a]{}
    }$                                                                                                                                                        \\ \\
    \hline                                                                                                                                                    \\
    Max.\ orthog.\ Grassmannian $\OG(6,12)$                                                                                                                   \\
    \begin{dynkinDiagram}[text style/.style={scale=0.9},labels={1,2,3,4,5,6},label directions={,,,right,,},scale=2]{D}{ooooo*}\end{dynkinDiagram} \\
    $\tableau{12}{
    [a]   & [a]    & [a]    & [a]    & [a]                                                                                                                    \\
          & [a]    & [a]    & [a]    & [a]                                                                                                                    \\
          &        & [a]    & [a]    & [a]                                                                                                                    \\
          &        &        & [a]    & [a]                                                                                                                    \\
          &        &        &        & [a]
    }$                                                                                                                                                        \\ \\
    \hline
  \end{tabular}
  \begin{tabular}{|c|}
    \hline                                                                                                                                          \\
    Even quadric $Q^{10} \subset \bP^{11}$                                                                                                          \\
    \multirow{4}{*}{\ \begin{dynkinDiagram}[text style/.style={scale=0.9},labels={1,2,3,4,5,6},label directions={,,,right,,},scale=2]
                          {D}{*ooooo}
                        \end{dynkinDiagram}} \\ \\ \\ \\
    $\tableau{12}{
    [a] & [a] & [a] & [a] & [a]                                                                                                                     \\
        &     &     & [a] & [a] & [a] & [a] & [a]
    }$                                                                                                                                              \\ \\
    \hline                                                                                                                                          \\

    Cayley Plane $E_6/P_6$                                                                                                                          \\
    \multirow{5}{*}{\ \begin{dynkinDiagram}[text style/.style={scale=0.9},labels={1,2,3,4,5,6},scale=2]
                          {E}{ooooo*}
                        \end{dynkinDiagram}}                                 \\ \\ \\ \\ \\
    $\tableau{12}{
    [a] & [a] & [a] & [a]                                                                                                                           \\
        &     & [a] & [a] & [a] & [a]                                                                                                               \\
        &     & [a] & [a] & [a] & [a]                                                                                                               \\
        &     &     &     & [a] & [a] & [a] & [a]
    }$                                                                                                                                              \\ \\
    \hline                                                                                                                                          \\
    Freudenthal variety $E_7/P_7$                                                                                                                   \\
    \multirow{5}{*}{\ \begin{dynkinDiagram}[text style/.style={scale=0.9},labels={1,2,3,4,5,6,7},scale=2]
                          {E}{oooooo*}
                        \end{dynkinDiagram}}                                 \\ \\ \\ \\ \\
    $\tableau{12}{
    [a] & [a] & [a] & [a] & [a] & [a]                                                                                                               \\
        &     &     & [a] & [a] & [a]                                                                                                               \\
        &     &     &     & [a] & [a] & [a]                                                                                                         \\
        &     &     &     & [a] & [a] & [a] & [a] & [a]                                                                                             \\
        &     &     &     & [a] & [a] & [a] & [a] & [a]                                                                                             \\
        &     &     &     &     &     &     & [a] & [a]                                                                                             \\
        &     &     &     &     &     &     &     & [a]                                                                                             \\
        &     &     &     &     &     &     &     & [a]                                                                                             \\
        &     &     &     &     &     &     &     & [a]
    }$                                                                                                                                              \\ \\
    \hline
  \end{tabular}\label{tab:poset}
\end{table}

\subsection*{Acknowledgements} We thank Anders Buch for helpful discussions and the anonymous referee for helpful comments. ER was supported by a grant from the Simons Foundation 941273. MȚ and WX were supported by NSF grant MS-2152316.

\section{Examples of \Theorem{main}}

\label{sec:examples}

For the following examples, recall that cominuscule Schubert varieties are indexed by lower order ideals in $\cP_X$. Examples of \(\cP_X\) are illustrated in \Table{poset}, where each element in \(\cP_X\) is drawn as a box, and boxes decorated with an ``\(s\)'' correspond to short roots.  The partial order on boxes is given by $\alpha \leq \beta$ if and only if $\alpha$ is weakly north-west of $\beta$. In particular, lower order ideals are given by subsets of boxes that are closed under moving to the north and west.

\begin{example}
        As illustrated below, transposing a Young diagram does not change the poset structure:
        \begin{equation*}
                \tableau{12}{
                [a]&[a]&[a]&[a]\\
                [a]&[a]\\
                [a]
                }\ \ \cong\ \
                \tableau{12}{
                [a]&[a]&[a]\\
                [a]&[a]\\
                [a]\\
                [a]
                }.
        \end{equation*}Therefore, two Grassmannian Schubert varieties are isomorphic if their indexing Young diagrams are the transpose of each other. Geometrically, this is related to the isomorphism \(\Gr(m,m+k) \cong \Gr(k, m+k)\), which comes from the reflection symmetry of the \(A_{m+k-1}\) Dynkin diagram:
        \begin{center}
                \begin{dynkinDiagram}[text style/.style={scale=0.9},scale=1.5]{A}{oo*ooo}\end{dynkinDiagram}\(\ \ \ \cong\ \ \ \)\begin{dynkinDiagram}[text style/.style={scale=0.9},scale=1.5, backwards]{A}{oo*ooo}\end{dynkinDiagram}.
        \end{center}
\end{example}

\begin{example}\label{example:D4}
        \begin{equation*}
                \cP_{Q^6}=\tableau{12}{
                [a]&[a]&[a]&[]\\
                []&[a]&[a]&[a]
                }\ \ \cong\ \
                \tableau{12}{[a]&[a]&[a]\\[]&[a]&[a]\\[]&[]&[a]}=\cP_{\OG(4,8)},
        \end{equation*}
        therefore, \(Q^6\cong \OG(4,8)\). This isomorphism comes from the rotation symmetry of the \(D_4\) Dynkin diagram:
        \begin{center}
                \begin{dynkinDiagram}[text style/.style={scale=0.9},scale=1.5]{D}{*ooo}\end{dynkinDiagram}\(\cong\ \ \)
                \begin{dynkinDiagram}[text style/.style={scale=0.9},scale=1.5]{D}{ooo*}\end{dynkinDiagram}.
        \end{center}
\end{example}

\begin{example}
        Using \Table{poset}, it is not hard to see that if a Grassmannian Schubert variety is isomorphic to a non-type \(A\) cominuscule Schubert variety, then they are both isomorphic to a projective space. Indeed, in order to fit inside a \(\cP_X\) of another type, the lower order ideal is forced to be a chain.

        As a special case, we also see that any Schubert curve in any cominuscule flag variety is isomorphic to \(\P^1\). In fact, any Schubert curve in any flag variety is isomorphic to \(\P^1\), which follows from the more general statements that Schubert varieties are rational normal projective varieties and that \(\P^1\) is the only rational normal projective curve.
\end{example}

\begin{example}
        The Schubert divisor in \(Q^3\) is not isomorphic to \(\P^2\), because the labeling of their posets does not match:
        \begin{equation*}
                \tableau{12}{
                        [a]&[a]s
                }\ \ \not\cong\ \
                \tableau{12}{
                        [a]&[a]
                }\ .
        \end{equation*}
        We can also see it geometrically, as the Schubert divisor in \(Q^3\) is singular.
\end{example}

\begin{example}
        The quadric \(Q^3\) embeds in \(\LG(n,2n)\) (\(n\geq 3\)) as a Schubert variety, as illustrated by
        \begin{equation*}
                \tableau{12}{
                [AA]&[AA] s &[a] s &[a] s\\
                []&[AA]&[a]s &[a] s\\
                []&[]&[a]&[a]s\\
                []&[]&[]&[a]
                }\ .
        \end{equation*}
\end{example}

\begin{example}\label{example:Q10}
        The quadric \(Q^{10}\) embeds in \(E_7/P_7\) as a Schubert variety, as illustrated by
        \begin{equation*}
                \tableau{12}{
                [AA]&[AA]&[AA]&[AA]&[AA]&[a]\\
                []&[]&[]&[AA]&[AA]&[a]\\
                []&[]&[]&[]&[AA]&[a]&[a]\\
                []&[]&[]&[]&[AA]&[a]&[a]&[a]&[a]\\
                []&[]&[]&[]&[AA]&[a]&[a]&[a]&[a]\\
                []&[]&[]&[]&[]&[]&[]&[a]&[a]\\[]&[]&[]&[]&[]&[]&[]&[]&[a]\\[]&[]&[]&[]&[]&[]&[]&[]&[a]\\[]&[]&[]&[]&[]&[]&[]&[]&[a]
                }\ .
        \end{equation*}
\end{example}

\begin{example}
        There are two non-isomorphic \(6\)-dimensional Schubert varieties in \(E_6/P_6\), given by the two order ideals illustrated below.
        \begin{equation*}
                \tableau{12}{
                [AA]&[AA]&[AA]&[AA]\\
                []&[]&[AA]&[a]&[a]&[a]\\
                []&[]&[AA]&[a]&[a]&[a]\\
                []&[]&[]&[]&[a]&[a]&[a]&[a]
                }\ \ \not\cong\ \
                \tableau{12}{
                [AA]&[AA]&[AA]&[AA]\\
                []&[]&[AA]&[AA]&[a]&[a]\\
                []&[]&[a]&[a]&[a]&[a]\\
                []&[]&[]&[]&[a]&[a]&[a]&[a]
                }
        \end{equation*}
\end{example}

\begin{example}
        While \(\cP_{\LG(n,2n)}\) and \(\cP_{\OG(n+1,2n+2)}\) are isomorphic as posets, this isomorphism does not preserve the labeling of long/short roots (see illustration below). As a result, \(\LG(n,2n)\) and \(\OG(n+1,2n+2)\) do not contain isomorphic Schubert varieties of dimension greater than one.
        \begin{equation*}
                \tableau{12}{
                [a]&[a] s &[a] s &[a] s\\
                []&[a]&[a]s &[a] s\\
                []&[]&[a]&[a]s\\
                []&[]&[]&[a]
                }\ \ \not\cong\ \
                \tableau{12}{
                [a]&[a] &[a] &[a]\\
                []&[a]&[a] &[a] \\
                []&[]&[a]&[a]\\
                []&[]&[]&[a]
                }
        \end{equation*}
\end{example}

\section{Preliminaries}\label{sec:prelim}

Let \(G\) be a complex reductive linear algebraic group. We fix subgroups \(T \subset B \subset G\), where \(T\) is a maximal torus and \(B\) is a Borel subgroup. With this setup, \(T \subset G\) determines a root system \(R\) of \(G\), with corresponding Weyl group \(W:= N(T)/T\), and \(B\) determines a set of simple roots \(\Delta\subseteq R\). The set of roots decomposes into positive and negative roots $R=R^+\sqcup R^-$, with \(R^+\) being non-negative sums of simple roots. The Weyl group \(W\) is generated by the set of simple reflections
\[
        S\coloneqq\{s_\alpha: \alpha\in\Delta\}.
\]

To each subset \(I \subseteq S\) one can associate a Weyl subgroup \(W_I := \langle s : s \in I\rangle \subseteq W \), a parabolic subgroup \(P_I = B W_I B \subseteq G\) and the corresponding (partial) flag variety \(X = G/P_I\). Schubert varieties in \(X\) are indexed by \(W^I\), the set of minimal length coset representatives of \(W/W_I\). Explicitly, for \(w\in W^I\), the Schubert variety
\[
        X_w:=\overline{BwP_I/P_I}
\]
has dimension the Coxeter length of \(w\), denoted \(\ell(w)\). Moreover, for any \(u\in W^I\), we have \(X_u\subseteq X_w\) if and only if \(u\leq w\) in Bruhat order.

From now on, \(X\) is a cominuscule flag variety. In other words, \(I = S \setminus \{s_\gamma\}\), where \(\gamma\) is a cominuscule simple root, i.e., \(\gamma\) appears with coefficient \(1\) in the highest root of \(R\). Cominuscule roots are illustrated by filled-in circles in \Table{comin} and \Table{poset}.

Recall that
\[
        \cP_X\coloneqq \{\alpha\in R: \alpha\geq\gamma\}
\]
inherits the usual partial order on roots, i.e., $\alpha\leq \beta$ if $\beta-\alpha$ is a non-negative sum of simple roots, and in addition, we give \(\cP_X\) a labeling of long/short roots.

In \cite{Proctor84}, Proctor proves that \(W^I\) is a distributive lattice under the induced Bruhat partial order from $W$. Birkhoff's representation theorem implies there is a bijection between $W^I$ and the set of lower order ideals in $\cP_X$. In particular, the join-irreducible elements of $W^I$ are identified with principal lower order ideals of $\cP_X$ and hence with $\cP_X$ itself. See \Figure{bruhat} for an illustration when \(X=\Gr(2,4)\). Explicitly, to each \(w\in W^I\) we associate its inversion set
\begin{equation}\label{eqn:inversion_set}
        \lambda(w)\coloneqq \{\alpha\in R^+: w.\alpha<0\},
\end{equation}
viewed as a sub-poset of \(\cP_X\). It is well known that $\ell(w)=|\lambda(w)|$. Moreover, the following proposition was proved in \cite[Proposition 2.1 and Lemma 2.2]{ThYo09} and \cite[Theorem 2.4 and Corollary 2.6]{buch2016k}:

\begin{prop}[Thomas--Yong, Buch--Samuel]\label{prop:comin_properties}
        For any \(w\in W^I\), the inversion set $\lambda(w)$ is a lower order ideal in $\cP_X$.  Moreover:
        \begin{enumerate}
                \item The map $w\mapsto \lambda(w)$ is a bijection between $W^I$ and the set of lower order ideals in $\cP_X$.
                \item For any $u\in W^I$, we have $u\leq w$ in Bruhat order if and only if $\lambda(u)\subseteq \lambda(w)$.
                \item If $\alpha\in \lambda(w)$ and $\lambda(w)\setminus\{\alpha\}$ is a lower order ideal, then $ws_\alpha\in W^I$ and $\lambda(ws_\alpha)=\lambda(w)\setminus\{\alpha\}$, where $s_\alpha\in W$ is the reflection corresponding to \(\alpha\).
        \end{enumerate}
\end{prop}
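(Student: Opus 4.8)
The plan is to extract everything from two structural features of the cominuscule poset \(\cP_X\), together with the standard description of minimal coset representatives. Writing \(c_\gamma(\beta)\) for the coefficient of \(\gamma\) in the expansion of a root \(\beta\) in simple roots, I would first record that cominuscularity (\(c_\gamma\le 1\) on all positive roots) gives the identification \(\cP_X=\{\beta\in R^+:c_\gamma(\beta)=1\}\): the inclusion \(\subseteq\) is immediate, and the reverse follows by induction on height, peeling off a simple root \(\alpha_s\) with \(\beta-\alpha_s\in R^+\) and splitting into the cases \(\alpha_s=\gamma\) and \(\alpha_s\neq\gamma\). The crucial companion fact is that any two roots \(\delta,\beta\in\cP_X\) are non-obtuse, i.e. \(\langle\delta,\beta^\vee\rangle\ge 0\): were the pairing negative, \(\delta+\beta\) would be a positive root with \(c_\gamma=2\), contradicting cominuscularity. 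I will use repeatedly that \(w\in W^I\) is equivalent to \(w(\alpha_s)>0\) for every simple root \(\alpha_s\neq\gamma\), equivalently \(w(R_I^+)\subseteq R^+\), where \(R_I^+:=\{\beta\in R^+:c_\gamma(\beta)=0\}\) is the set of positive roots not involving \(\gamma\).

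Second, I would show \(\lambda(w)\subseteq\cP_X\) is a lower order ideal. Since \(w\) sends each root in \(R_I^+\) to a positive root, every \(\beta\in\lambda(w)\) has \(c_\gamma(\beta)=1\), so \(\lambda(w)\subseteq\cP_X\). If \(\alpha\in\lambda(w)\) and \(\beta\in\cP_X\) with \(\beta\le\alpha\), then \(\alpha-\beta\) is a non-negative combination of simple roots other than \(\gamma\), each of which \(w\) sends to a positive root, so \(w(\alpha-\beta)\) is a non-negative combination of simple roots; as \(w\alpha<0\), the root \(w\beta=w\alpha-w(\alpha-\beta)\) is a non-positive combination of simple roots, hence \(w\beta<0\) and \(\beta\in\lambda(w)\).

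The engine of the argument is item (3), which I expect to be the main obstacle. Fix \(\alpha\) maximal in \(\lambda(w)\) and set \(w'=ws_\alpha\). One checks \(w'\alpha=-w\alpha>0\), so \(\alpha\notin\lambda(w')\). For the two inclusions between \(\lambda(w')\) and \(\lambda(w)\setminus\{\alpha\}\) I would analyze, for each \(\beta\in R^+\), the identity \(w'\beta=w\beta-\langle\beta,\alpha^\vee\rangle\,w\alpha\) together with the sign of \(s_\alpha\beta\). For \(\beta\in\lambda(w)\setminus\{\alpha\}\), non-obtuseness gives \(\langle\beta,\alpha^\vee\rangle\ge 0\); the cases \(\langle\beta,\alpha^\vee\rangle=0,1,2,\dots\) are controlled because \(s_\alpha\beta\) is again a root whose \(\gamma\)-coefficient is forced by \(c_\gamma\le1\), and because maximality of \(\alpha\) keeps the relevant roots of \(\cP_X\) lying strictly above \(\alpha\) (such as \(\beta+\alpha\) or \(2\alpha-\beta\)) out of \(\lambda(w)\); this yields \(w'\beta<0\), so \(\lambda(w)\setminus\{\alpha\}\subseteq\lambda(w')\). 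Conversely, if \(w'\beta<0\) but \(w\beta>0\), one is forced into \(\langle\beta,\alpha^\vee\rangle=-1\) and \(s_\alpha\beta=\beta+\alpha\in\lambda(w)\), again contradicting maximality; hence \(\lambda(w')\subseteq\lambda(w)\). Combining, \(\lambda(w')=\lambda(w)\setminus\{\alpha\}\); in particular \(\ell(w')=\ell(w)-1\), and since \(\lambda(w')\) meets no root of \(R_I^+\), we get \(w'\in W^I\). The delicate point — the step I would spend the most care on — is the bookkeeping of these reflection strings in the non–simply-laced types, where \(\langle\beta,\alpha^\vee\rangle\) can be \(2\) or \(3\).

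With (3) in hand, items (1) and (2) follow formally. Injectivity of \(w\mapsto\lambda(w)\) is the standard fact that an inversion set determines a Weyl group element, and since \(|W^I|\) equals the number of lower order ideals of \(\cP_X\) (Proctor together with Birkhoff, as recalled above), \(\lambda\) is a bijection; alternatively, surjectivity comes for free by peeling maximal boxes off \(\cP_X=\lambda(w_0^I)\) using (3). For (2), the implication \(u\le w\Rightarrow\lambda(u)\subseteq\lambda(w)\) reduces, via a saturated chain in \(W^I\), to a single Bruhat cover \(w=us_\beta\), for which \(\beta\in\lambda(w)\subseteq\cP_X\); then for \(\delta\in\lambda(u)\) non-obtuseness gives \(\langle\delta,\beta^\vee\rangle\ge 0\), so \(w\delta=u\delta-\langle\delta,\beta^\vee\rangle u\beta\) is a non-positive combination of simple roots, whence \(\delta\in\lambda(w)\). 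The reverse implication \(\lambda(u)\subseteq\lambda(w)\Rightarrow u\le w\) is an induction on \(|\lambda(w)\setminus\lambda(u)|\): choosing \(\alpha\) maximal in \(\lambda(w)\) among the roots outside \(\lambda(u)\) makes \(\alpha\) maximal in all of \(\lambda(w)\) (because \(\lambda(u)\) is an ideal), so (3) produces \(ws_\alpha\in W^I\) with \(\lambda(ws_\alpha)=\lambda(w)\setminus\{\alpha\}\supseteq\lambda(u)\) and \(ws_\alpha<w\), and the inductive hypothesis gives \(u\le ws_\alpha<w\). I note that the direction \(u\le w\Rightarrow\lambda(u)\subseteq\lambda(w)\) genuinely uses the cominuscule hypothesis — it already fails for Bruhat covers in an arbitrary Weyl group — so non-obtuseness of \(\cP_X\) is doing essential work.
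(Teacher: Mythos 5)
Your proof is correct, but note that the paper itself does not prove this proposition: it is quoted from Thomas--Yong and Buch--Samuel, so there is no in-paper argument to compare against, and what you have written is essentially a self-contained reconstruction of the standard proof from those references. It rests on exactly the two structural facts you isolate: every root of \(\cP_X\) has \(\gamma\)-coefficient exactly \(1\), and any two roots of \(\cP_X\) pair non-negatively. Your case analysis for item (3) does close up: for \(\beta\in\lambda(w)\setminus\{\alpha\}\) the pairing \(\langle\beta,\alpha^\vee\rangle\) lies in \(\{0,1,2\}\) (the value \(3\) is excluded because \(s_\alpha\beta\) would then have \(\gamma\)-coefficient \(-2\), so your worry about triply-laced bookkeeping is moot); in the case \(\langle\beta,\alpha^\vee\rangle=1\), the relevant root is \(\alpha-\beta\), which maximality of \(\alpha\) forces into \(R_I^+\) rather than into \(\cP_X\), so the mechanism there is \(w(R_I^+)\subseteq R^+\) rather than exclusion of a root of \(\cP_X\) from \(\lambda(w)\) --- a small imprecision in your parenthetical, not an error. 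Two points to make explicit in a full write-up: the saturated-chain argument for \(u\le w\Rightarrow\lambda(u)\subseteq\lambda(w)\) needs the chain to stay inside \(W^I\), which is the standard gradedness of the Bruhat order on minimal coset representatives; and the counting route to surjectivity via ``Proctor plus Birkhoff'' is mildly circular (identifying the join-irreducibles of \(W^I\) with \(\cP_X\) is part of what is being established), so you should lean on your alternative peeling argument via item (3), which is self-contained. With those caveats addressed, this is a complete and correct proof.
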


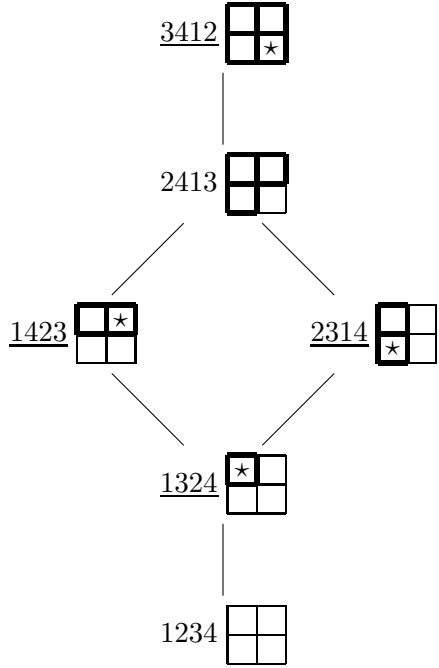
\begin{figure}[!ht]
        \caption{The Bruhat poset \(W^I\) when \(X=\Gr(2,4)\). Permutations in \(W^I\) are denoted using one-line notation, and next to each is the corresponding lower order ideal in \(\cP_X\). Join-irreducible elements of \(W^I\) are the ones underlined, and the generator of the corresponding principal lower order ideal in \(\cP_X\) is decorated with a \(\star\).}\label{fig:bruhat}
        \centering
        \begin{tikzpicture}
                \node (max) at (0,4) {$ \underline{3412}\  \tableau{11}{[AA] & [AA] \\ [AA] & [AA] \star}  $};
                \node (a) at (0,2) {$ 2413\  \tableau{11}{[AA] & [AA] \\ [AA] & [a]}$};
                \node (d) at (-2,0) {$\underline{1423}\  \tableau{11}{[AA] & [AA] \star \\ [a] & [a]}$};
                \node (e) at (2,0) {$\underline{2314}\  \tableau{11}{[AA] & [a] \\ [AA] \star & [a]}$};
                \node (f) at (0,-2) {$\underline{1324}\  \tableau{11}{[AA] \star & [a] \\ [a] & [a]}$};
                \node (min) at (0,-4) {$1234\  \tableau{11}{[a] & [a] \\ [a] & [a]}$};
                \draw (min) -- (f) -- (d) -- (a) -- (max);
                \draw (f)--(e)--(a);
        \end{tikzpicture}

\end{figure}

\begin{notation}
        Given a lower order ideal \(\lambda\subseteq\cP_X\), we will write \(w_\lambda\) for the element of \(W^I\) corresponding to \(\lambda\) in \Proposition{comin_properties}. We also use $X_\lambda:=X_{w_\lambda}$ to denote the corresponding Schubert variety.
\end{notation}

In Section \ref{sec:converse}, we will use a map \(\delta: \cP_X\to\Delta\) defined in \cite{buch2022positivity} as follows.

\begin{defn}
        For \(\alpha\in\cP_X\), let \(\lambda_\alpha\) be the principal lower order ideal generated by \(\alpha\). Let \(\delta(\alpha)=-w_{\lambda_\alpha}.\alpha\in R^+\). Then \(s_{\delta(\alpha)}=w_{\lambda_\alpha}s_\alpha w_{\lambda_\alpha}^{-1}\) has length \(1\) \cite{buch2022positivity}*{Section 4.1}. Therefore, \(\delta(\alpha)\in\Delta\).
\end{defn}

The following lemma is a restatement of \cite{buch2016k}*{Corollary 2.10}. See also \cite{buch2022positivity}*{Section 4.1}.

\begin{lemma}[Buch--Samuel]\label{lemma:red_decomp}
        Let \(\lambda\subseteq\cP_X\) be a lower order ideal and \(\beta_1,\beta_2,\dots,\beta_\ell\) be the boxes it contains written in an increasing order. Then \(s_{\delta(\beta_\ell)}\cdots s_{\delta(\beta_2)}s_{\delta(\beta_1)}\) is a reduced decomposition of \(w_\lambda\). Moreover, every reduced decomposition of \(w_\lambda\) can be obtained in this way.
\end{lemma}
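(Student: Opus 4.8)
The plan is to prove both assertions by induction on \(\ell=|\lambda|\), peeling off one box at a time, with all of the reflection combinatorics controlled by a single structural fact about \(\cP_X\). For the first assertion, write \(\lambda_k=\{\beta_1,\dots,\beta_k\}\); since \(\beta_1,\dots,\beta_\ell\) is a linear extension, each \(\lambda_k\) is a lower order ideal and \(\beta_k\) is maximal in \(\lambda_k\). By \Proposition{comin_properties}(3), \(w_{\lambda_k}s_{\beta_k}=w_{\lambda_{k-1}}\), so \(w_{\lambda_k}=w_{\lambda_{k-1}}s_{\beta_k}\) with \(\ell(w_{\lambda_k})=\ell(w_{\lambda_{k-1}})+1\). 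Conjugating, \(w_{\lambda_k}=\bigl(w_{\lambda_{k-1}}s_{\beta_k}w_{\lambda_{k-1}}^{-1}\bigr)w_{\lambda_{k-1}}=s_{w_{\lambda_{k-1}}.\beta_k}\,w_{\lambda_{k-1}}\). The crux is the identity \(w_{\lambda_{k-1}}.\beta_k=\delta(\beta_k)\) (the auxiliary lemma below); granting it, \(w_{\lambda_k}=s_{\delta(\beta_k)}w_{\lambda_{k-1}}\), and since the length increased by exactly one, induction yields that \(s_{\delta(\beta_\ell)}\cdots s_{\delta(\beta_1)}\) is reduced and equals \(w_\lambda\).

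The auxiliary lemma I would isolate states: if \(\nu\) is a lower order ideal, \(\beta\in\cP_X\setminus\nu\), and every \(\alpha<\beta\) lies in \(\nu\), then \(w_\nu.\beta=\delta(\beta)\). When \(\nu\) equals \(\mu:=\{\alpha\in\cP_X:\alpha<\beta\}\), the principal ideal \(\lambda_\beta\) factors as \(w_{\lambda_\beta}=w_\mu s_\beta\), so \(\delta(\beta)=-w_{\lambda_\beta}.\beta=w_\mu.\beta\) directly from the definition of \(\delta\). In general the boxes of \(\nu\setminus\mu\) are neither below \(\beta\) (they avoid \(\mu\)), nor above \(\beta\) (else \(\beta\in\nu\) as \(\nu\) is a lower ideal), nor equal to \(\beta\); hence they are incomparable to \(\beta\). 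Deleting them from the top one at a time (each maximal in the current ideal, via \Proposition{comin_properties}(3)) replaces \(w\) by \(ws_\alpha\) with \(\alpha\) incomparable to \(\beta\), and leaves \(w.\beta\) unchanged provided \(s_\alpha.\beta=\beta\), i.e.\ \(\langle\beta,\alpha^\vee\rangle=0\).

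This orthogonality of incomparable boxes is the heart of the argument and the step I expect to be the main obstacle; it is also exactly where the cominuscule hypothesis enters. I would prove it uniformly: every root in \(\cP_X\) has \(\gamma\)-coefficient exactly \(1\) (it is \(\geq\gamma\), and \(\gamma\) is cominuscule so its coefficient in the highest root, and hence in any positive root, is at most \(1\)). Thus for distinct \(\alpha,\beta\in\cP_X\), if \((\alpha,\beta)<0\) then \(\alpha+\beta\) is a root of \(\gamma\)-coefficient \(2\), which is impossible; and if \((\alpha,\beta)>0\) then \(\alpha-\beta\) is a root of \(\gamma\)-coefficient \(0\), forcing \(\alpha\) and \(\beta\) to be comparable. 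Hence incomparability forces \((\alpha,\beta)=0\), and the deletion above reduces \(w_\nu.\beta\) to \(w_\mu.\beta=\delta(\beta)\), completing the auxiliary lemma and thereby the first assertion.

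For the converse, I would induct on a reduced word \(s_{d_\ell}\cdots s_{d_1}\) of \(w_\lambda\). Reducedness gives \(s_{d_\ell}w_\lambda<w_\lambda\), and a standard Coxeter-theoretic fact ensures that for \(w\in W^I\) with \(sw<w\) one has \(sw\in W^I\) (otherwise right multiplication by a generator of \(W_I\) would alter the length of \(s_iw^{-1}\) by incompatible amounts). So \(w':=s_{d_\ell}w_\lambda\in W^I\), and being a Bruhat cover it satisfies \(\lambda(w')=\lambda\setminus\{\beta_\ell\}\) for a box \(\beta_\ell\) that is necessarily maximal, by \Proposition{comin_properties}(2) together with the fact that \(\lambda(w')\) is a lower order ideal. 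By induction the word \(s_{d_{\ell-1}}\cdots s_{d_1}\) of \(w'\) arises from a linear extension \(\beta_1,\dots,\beta_{\ell-1}\) of \(\lambda\setminus\{\beta_\ell\}\) with \(d_j=\delta(\beta_j)\); appending \(\beta_\ell\) gives a linear extension of \(\lambda\), and comparing \(w_\lambda=s_{d_\ell}w'\) with the factorization \(w_\lambda=s_{\delta(\beta_\ell)}w'\) from the first part forces \(d_\ell=\delta(\beta_\ell)\). Thus every reduced word is realized by a linear extension, which is the ``moreover'' statement.
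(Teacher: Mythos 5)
Your proposal is correct, but note that the paper does not actually prove this lemma: it is stated as a restatement of \cite{buch2016k}*{Corollary 2.10} and used as a black box. What you have written is therefore a self-contained reconstruction of the cited result, and it is sound. The inductive peeling via \Proposition{comin_properties}(3), the conjugation identity $w_{\lambda_{k-1}}s_{\beta_k}w_{\lambda_{k-1}}^{-1}=s_{w_{\lambda_{k-1}}.\beta_k}$, and the reduction of $w_\nu.\beta$ to $w_\mu.\beta=\delta(\beta)$ all check out, and you correctly identify the load-bearing fact: distinct incomparable roots of $\cP_X$ are orthogonal because every element of $\cP_X$ has $\gamma$-coefficient exactly $1$, so neither their sum nor their difference can be a root unless they are comparable. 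This is precisely the structural input that Thomas--Yong and Buch--Samuel exploit, so your route is essentially the same as the source's, just rederived. Two small points. First, in the auxiliary lemma you should say explicitly that the boxes of $\nu\setminus\mu$ are removed along a linear extension of $\nu$ that begins with one of $\mu$ (which exists since $\mu$ is a lower order ideal contained in $\nu$); otherwise ``from the top one at a time'' does not guarantee $\mu$ stays inside the current ideal. Second, your parenthetical justification of the fact that $w\in W^I$ and $sw<w$ imply $sw\in W^I$ is garbled as written; the fact itself is the standard trichotomy for left multiplication by a simple reflection on minimal coset representatives (Deodhar's lemma), so either cite it or state it cleanly rather than gesture at lengths of $s_iw^{-1}$. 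Neither issue is a genuine gap.
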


\section{Proof of \Theorem{main}}\label{sec:proof}

In this section, we prove each direction of \Theorem{main} separately.

\subsection{Forward direction: the isomorphism class of \(X_\lambda\) determines the labeled poset \(\lambda\)}\label{sec:forward}

Let \(i_\lambda: X_\lambda \hookrightarrow X\) denote the embedding of a Schubert subvariety into a cominuscule flag variety \(X=G/P_I\).  The definition of the labeled poset $\lambda$ depends on the root system of the reductive group $G$ and hence on the embedding \(i_\lambda: X_\lambda \hookrightarrow X\).  The goal of this section is to show that $\lambda$ (as a labeled poset) can be constructed using only the variety structure of $X_\lambda$ and is therefore intrinsic to the isomorphism class of $X_\lambda$.  We prove the following proposition which states the ``forward" direction of \Theorem{main}.

\begin{prop}\label{prop:forward}
        Let $X_\lambda\subseteq X$ and $Y_{\mu}\subseteq Y$ be cominuscule Schubert varieties indexed by lower order ideals \(\lambda\subseteq\cP_X\) and \(\mu\subseteq\cP_Y\), respectively. If $X_\lambda$ and $Y_{\mu}$ are algebraically isomorphic, then $\lambda$ and $\mu$ are isomorphic as labeled posets.
\end{prop}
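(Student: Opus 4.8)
The plan is to reconstruct the labeled poset $\lambda$ from intrinsic algebro-geometric invariants of the abstract variety $X_\lambda$, so that any isomorphism $\phi\colon X_\lambda\xrightarrow{\sim}Y_\mu$ is forced to carry one labeled poset to the other. Three pieces of structure will suffice: the Chow group $A_*(X_\lambda)$ together with its cone of effective classes; the Picard group $\Pic(X_\lambda)$ with its ample generator; and the cap-product action of $c_1$ of a line bundle on cycle classes. Since $X_\lambda$ is typically singular, I deliberately avoid the full Chow ring and use only the well-defined action of Cartier divisors on Chow cycles — this is exactly why the statement pairs "the effective cone in the Chow group" with "intersection products against the generator of the Picard group."

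First I would record the combinatorial skeleton. The Schubert cell decomposition of $X_\lambda$ shows $A_*(X_\lambda)$ is free on the Schubert classes $[X_\nu]$ for lower order ideals $\nu\subseteq\lambda$, and by positivity every effective class expands with non-negative coefficients in this basis; hence the effective cone is the simplicial cone on the $[X_\nu]$, whose extremal rays are exactly the Schubert classes. Next I would use that $\Pic(X_\lambda)\cong\Z$, with $L:=i_\lambda^*\cO(1)$ the unique effective (ample) generator. Because $\phi$ is an isomorphism, $\phi_*$ carries the effective cone isomorphically onto that of $Y_\mu$ and $\phi^*$ is a Picard isomorphism preserving ampleness; thus $\phi^*L_Y=L_X$, and $\phi_*$ restricts to a dimension-preserving bijection $\sigma$ between the Schubert classes of $X_\lambda$ and $Y_\mu$, i.e. a bijection between the lattices of lower order ideals of $\lambda$ and of $\mu$.

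Both the order and the labeling are then read off from the single operator $D:=c_1(L)\cap(-)$, which $\sigma$ intertwines: by the projection formula $\phi_*(c_1(L_X)\cap z)=c_1(L_Y)\cap\phi_*z$. Transporting the Chevalley formula from $X$ to $X_\lambda$ via the injective pushforward $i_{\lambda*}$ gives $D\,[X_\nu]=\sum_{\alpha}\langle\omega_\gamma,\alpha^\vee\rangle\,[X_{\nu\setminus\{\alpha\}}]$, the sum over the maximal boxes $\alpha$ of $\nu$. The covering relations $\nu\setminus\{\alpha\}\lessdot\nu$ are precisely the pairs with nonzero coefficient, so $\sigma$ matches Hasse diagrams, and Birkhoff's theorem (applied to join-irreducibles) upgrades this to a poset isomorphism $\lambda\cong\mu$. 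For the labeling I would compute the coefficient directly: as $\gamma$ is cominuscule it occurs with coefficient $1$ in every $\alpha\in\cP_X$, so $\langle\omega_\gamma,\alpha^\vee\rangle=(\alpha_\gamma,\alpha_\gamma)/(\alpha,\alpha)$, which is $1$ when $\alpha$ is long and $2$ when $\alpha$ is short (the only non-simply-laced cominuscule types are $B_n$ and $C_n$, and in both $\gamma$ is long). Hence the coefficient attached to the box $\alpha$ detects its long/short label, and since $\sigma$ preserves these coefficients, the poset isomorphism is in fact an isomorphism of labeled posets.

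The main obstacle is not the combinatorial bookkeeping but securing the intrinsic input, because $X_\lambda$ is singular. Concretely, the crux is establishing that $\Pic(X_\lambda)\cong\Z$ with a unique effective generator, so that $\phi$ has no choice but to preserve $L$, together with the fact that the extremal rays of the effective cone of $A_*(X_\lambda)$ are exactly the Schubert classes. Both require care: the Weil class group can be strictly larger than $\Pic$, as already happens for the Schubert divisor $Q^3\subset\Gr(2,4)$, where the two Schubert divisors are Weil but not Cartier and only their sum is the ample generator. Once these invariance statements and the cap-product compatibilities are in place, the computation $\langle\omega_\gamma,\alpha^\vee\rangle\in\{1,2\}$ makes the conclusion essentially formal.
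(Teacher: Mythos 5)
Your proposal is correct and follows essentially the same route as the paper: identify the Schubert basis as the extremal rays of the effective cone in $A_*(X_\lambda)$ (the paper cites Fulton--MacPherson--Sottile--Sturmfels for this), use that $\Pic(X_\lambda)$ has a unique effective generator pulled back from $X$, recover the lattice of lower order ideals from the Chevalley coefficients and pass to join-irreducibles via Birkhoff, and read the long/short label from the coefficient $(\gamma,\gamma)/(\alpha,\alpha)\in\{1,2\}$. The two points you flag as the crux are exactly the paper's Lemmas on $\Pic$ and on the effective cone, so no substantive difference remains.
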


Our primary tools come from the intersection theory of algebraic varieties (see \cite{MR1644323} for more details).  Let \(\Pic(X_\lambda)\) and \(A_*(X_\lambda)\) denote the Picard and Chow groups of $X_\lambda$.  It is well known that these groups are algebraic invariants of \(X_\lambda\). Recall that the \(k\)-th Chow group \(A_k(Z)\) of a scheme \(Z\) is the free abelian group on the \(k\)-dimensional subvarieties of \(Z\) modulo rational equivalence. When \(Z\) is a normal variety, the Picard group \(\Pic(Z)\) can be identified with the subgroup of \(A_{\dim(Z)-1}(Z)\) generated by classes of locally principal divisors (note that all Schubert varieties are normal).  Our aim is to construct the labeled poset \(\lambda\) from the intersection class map or intersection product \cite[Definition 2.3]{MR1644323}:
\[
        \Pic(X_\lambda)\times A_*(X_\lambda)\to A_*(X_\lambda).
\]
If $(\sigma,\tau)\in \Pic(X_\lambda)\times A_*(X_\lambda)$, we denote the image of the intersection product by $\sigma\cdot\tau$.  Next, we consider the effective cone of a scheme:
\begin{defn}
        Let \(Z\) be a scheme. The \emph{effective cone} in the Chow group \(A_*(Z)\) is the semigroup in \(A_*(Z)\) generated by the classes of closed subvarieties of \(Z\).
\end{defn}
Since the flag variety $X$ is cominuscule, there is a unique Schubert variety of codimension \(1\) in \(X\), called the Schubert divisor.  Its class, denoted \(D\), is the unique effective generator of the Picard group \(\Pic(X)\subseteq A_*(X)\). Recall that \(i_\lambda: X_\lambda \hookrightarrow X\) is a closed embedding of varieties and let \(i_\lambda^*: \Pic(X)\to \Pic(X_\lambda)\) denote the induced map on Picard groups.  \Lemma{pic} below follows from \cite[Proposition 6]{Ol88} (see also \cite[Proposition 2.2.8 part (ii)]{Br05}).

\begin{lemma}\label{lemma:pic}
        For any non-empty lower order ideal \(\lambda\subseteq \cP_X\), the map \(i_\lambda^*: \Pic(X)\to \Pic(X_\lambda)\) is an isomorphism.
\end{lemma}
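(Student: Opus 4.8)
The plan is to deduce the statement from the structural results on Picard groups of Schubert varieties cited just above, namely \cite[Proposition 6]{Ol88} and \cite[Proposition 2.2.8 part (ii)]{Br05}. From these one extracts the following description valid for a Schubert variety \(X_w\subseteq G/P_I\): the group \(\Pic(X_w)\) is free abelian, with a basis given by the restrictions of those ample generators \(\cO_X(\omega_\alpha)\), \(\alpha\notin I\), of \(\Pic(X)\) for which \(s_\alpha\le w\). In particular the restriction map \(i_\lambda^*\colon\Pic(X)\to\Pic(X_\lambda)\) is surjective, since its image already contains a basis. Because \(X\) is cominuscule, \(I=S\setminus\{s_\gamma\}\) omits only the single reflection \(s_\gamma\), so \(\Pic(X)=\Z\,D\) is infinite cyclic with generator the class \(D=\cO_X(\omega_\gamma)\) of the Schubert divisor, and the basis description leaves at most one surviving generator of \(\Pic(X_\lambda)\).

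Thus the whole argument reduces to checking that this single candidate generator survives restriction, i.e.\ that \(s_\gamma\le w_\lambda\) whenever \(\lambda\neq\emptyset\); this is where I would invoke \Proposition{comin_properties}. The cominuscule root \(\gamma\) is the unique minimal element of \(\cP_X=\{\alpha\in R:\alpha\ge\gamma\}\), so every nonempty lower order ideal contains \(\{\gamma\}\); moreover \(\{\gamma\}\) is itself a lower order ideal with \(w_{\{\gamma\}}=s_\gamma\). By \Proposition{comin_properties}(2), the containment \(\{\gamma\}\subseteq\lambda\) gives \(s_\gamma=w_{\{\gamma\}}\le w_\lambda\). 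Hence \(\Pic(X_\lambda)\) has rank exactly one, so both \(\Pic(X)\) and \(\Pic(X_\lambda)\) are infinite cyclic; a surjective homomorphism between two copies of \(\Z\) is automatically an isomorphism, and therefore \(i_\lambda^*\) is an isomorphism. The nonemptiness hypothesis enters precisely here: for \(\lambda=\emptyset\) one has \(w_\lambda=e\), the variety \(X_\lambda\) is a point, and \(\Pic(X_\lambda)=0\).

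The real content sits in the two cited results, so the main point to keep straight is the distinction between the Picard group of Cartier divisor classes and the codimension-one Chow group \(A_{\dim X_\lambda-1}(X_\lambda)\) of Weil divisor classes. When \(X_\lambda\) is singular, which is the typical case, it has several Schubert divisors, one for each maximal box of \(\lambda\), and these freely generate \(A_{\dim X_\lambda-1}(X_\lambda)\), a group whose rank may well exceed one. The force of \cite[Proposition 6]{Ol88} is that, despite this, the locally principal classes form a rank-one sublattice generated by the restriction of \(D\). Holding this Cartier-versus-Weil distinction firmly in mind is the only delicate step; everything else is bookkeeping with the cominuscule condition and \Proposition{comin_properties}. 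This distinction is also exactly what makes \Lemma{pic} useful for the forward direction, which must separate the one-dimensional \(\Pic(X_\lambda)\) from the larger effective cone sitting inside \(A_*(X_\lambda)\).
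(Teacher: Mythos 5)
Your proof is correct and follows essentially the same route as the paper, which simply cites \cite[Proposition 6]{Ol88} and \cite[Proposition 2.2.8(ii)]{Br05} for this lemma without further argument. You merely spell out the reduction those references leave implicit (rank one of \(\Pic(X)\) in the cominuscule case, and \(s_\gamma\le w_\lambda\) for nonempty \(\lambda\) via \Proposition{comin_properties}), and your Cartier-versus-Weil remark accurately reflects why the lemma is stated for \(\Pic\) rather than for the divisor class group.
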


Since $D$ is effective and generates $\Pic(X)$, \Lemma{pic} implies that \(i_\lambda^*(D)\) is the unique effective generator of \(\Pic(X_\lambda)\).  Recall from \Proposition{comin_properties} that we have lower order ideals $\mu\subseteq \lambda$ if and only if $w_\mu\leq w_\lambda$ in Bruhat order.  Hence, we have $\mu\subseteq \lambda$ if and only if $X_\mu\subseteq X_\lambda$.  We write \([X_\mu]\) for the class of \(X_\mu\) in \(A_*(X_\lambda)\).  It is well known that the classes $\{[X_\mu]\}_{\mu\subseteq \lambda}$ form an integral basis of $A_*(X_\lambda)$.  \Lemma{schub_classes} below is a special case of \cite[Corollary of Thereom 1]{MR1299008} and allows us to identify Schubert classes (the effective cone) in \(A_*(X_\lambda)\).

\begin{lemma}[Fulton--MacPherson--Sottile--Sturmfels]\label{lemma:schub_classes}
        The Schubert classes \([X_\mu]\) such that \(\mu\subseteq \lambda\) are exactly the minimal elements in the extremal rays of the effective cone in \(A_*(X_\lambda)\).
\end{lemma}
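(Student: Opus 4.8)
The plan is to deduce the statement from the result of Fulton--MacPherson--Sottile--Sturmfels together with the fact, already recalled above, that the Schubert classes $\{[X_\mu]\}_{\mu\subseteq\lambda}$ form an integral basis of $A_*(X_\lambda)$. First I would observe that the Borel subgroup $B$ acts on $X_\lambda$ with finitely many orbits, namely the Schubert cells $B w P_I/P_I \cap X_\lambda$ indexed by those $w_\mu$ with $w_\mu \le w_\lambda$; each such orbit is an affine space, and the closure of the orbit indexed by $\mu$ is precisely the Schubert variety $X_\mu$. Since $B$ is connected and solvable and the action has finitely many orbits, the cited corollary of \cite{MR1299008} applies: a class in $A_*(X_\lambda)$ is effective (i.e., lies in the effective cone) if and only if it is a non-negative integer combination of the orbit-closure classes $[X_\mu]$, $\mu\subseteq\lambda$. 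The essential input here is the degeneration argument of that paper: using the $B$-action, any subvariety $V\subseteq X_\lambda$ can be specialized to a $B$-invariant cycle supported on orbit closures, which is rationally equivalent to $[V]$ and manifestly a non-negative combination of the $[X_\mu]$. Thus the effective cone equals the sub-semigroup $\sum_{\mu\subseteq\lambda}\Z_{\ge 0}[X_\mu]$.

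Next I would translate this into convex geometry. Because $\{[X_\mu]\}_{\mu\subseteq\lambda}$ is a basis, the real cone $\sum_{\mu\subseteq\lambda}\R_{\ge 0}[X_\mu]$ spanned by the effective cone is simplicial, and its extremal rays are exactly the coordinate rays $\R_{\ge 0}[X_\mu]$, one for each $\mu\subseteq\lambda$. By linear independence, the intersection of the effective cone with the ray $\R_{\ge 0}[X_\mu]$ is exactly $\Z_{\ge 0}[X_\mu]$, whose minimal nonzero element is $[X_\mu]$ itself. Hence the minimal elements of the extremal rays of the effective cone are precisely the Schubert classes $[X_\mu]$ with $\mu\subseteq\lambda$, as claimed.

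The bulk of the mathematical content is imported through the Fulton--MacPherson--Sottile--Sturmfels corollary, so the main thing to verify carefully is that its hypotheses hold for $X_\lambda$ --- that $X_\lambda$ is a $B$-variety with finitely many orbits whose closures are exactly the $X_\mu$ --- and then the bookkeeping identifying the effective cone with the semigroup generated by the Schubert basis. The only genuinely subtle point is interpreting \emph{minimal elements in the extremal rays}: I read this as the primitive (smallest nonzero) lattice points on each extremal ray, and the linear independence of the basis is exactly what guarantees that these primitive generators coincide with the classes $[X_\mu]$ rather than proper multiples or non-trivial sums.
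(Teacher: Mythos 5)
Your proposal is correct and follows essentially the same route as the paper, which gives no independent argument but simply cites the Fulton--MacPherson--Sottile--Sturmfels corollary; your verification that $X_\lambda$ is a $B$-variety with finitely many orbits whose closures are the $X_\mu$, together with the convex-geometric bookkeeping using the fact that the $[X_\mu]$ form an integral basis, is exactly the intended justification. The reading of ``minimal elements in the extremal rays'' as the primitive lattice points on the coordinate rays of the simplicial cone is also the intended one.
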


We shall see later that the poset structure of \(\lambda\) can be recovered from the intersection products \(i_\lambda^*(D)\cdot [X_\mu]\).  Let $(i_{\lambda})_*:A_*(X_\lambda)\rightarrow A_*(X)$ denote the proper push-forward on Chow groups.  By the projection formula (\cite[Proposition 2.5 (c)]{MR1644323}), we have
\[
        (i_{\lambda})_*(i_\lambda^*(D)\cdot [X_\mu])=D\cdot (i_\lambda)_*([X_\mu]).
\]
Since $(i_{\lambda})_*$ is injective, the product \(i_\lambda^*(D)\cdot [X_\mu]\) in \(A_*(X_\lambda)\) can be computed via the product \(D\cdot (i_\lambda)_*([X_\mu])\) in \(A_*(X)\). By \cite[Example 19.1.11]{MR1644323}, the Chow group \(A_*(X)\) can be identified with the homology group \(H_*(X)\), with \((i_\lambda)_*([X_\mu])\) corresponding to the homology class of the Schubert variety $X_\mu\subseteq X$.  By \cite[Proposition 19.1.2]{MR1644323} we have that the intersection product \(D\cdot (i_\lambda)_*([X_\mu])\) can be identified with a cap product.  Since \(X\) is a smooth complex variety, the Poincar\'e duality  further identifies the intersection product with the cup product of cohomology classes corresponding to \(D\) and \((i_\lambda)_*([X_\mu])\). This cup product is given by the Chevalley formula \cite[Lemma 8.1]{fulton2004quantum}. Using these identifications, we restate the Chevalley formula for cominuscule flag varieties (and hence Schubert varieties):

\begin{lemma}[Fulton--Woodward]\label{lemma:chevalley}
        Let \(X\) be a cominuscule flag variety with corresponding cominuscule simple root $\gamma$.  For any lower order ideals \(\mu\subseteq\lambda\subseteq\cP_X\), let \([X_\mu]\) denote the class of $X_\mu$ in \(A_*(X_\lambda)\).  Then
        \begin{equation*}
                \label{eqn:mult_equation}
                i_\lambda^*(D)\cdot [X_\mu]=\sum \frac{(\gamma,\gamma)}{(\alpha,\alpha)}\, [X_{\mu\setminus\{\alpha\}}]
        \end{equation*}
        where the sum is over all positive roots \(\alpha\) such that \(\mu\setminus\{\alpha\}\) is a lower order ideal in \(\cP_X\).  Here $(\cdot,\cdot)$ denotes the usual inner product.
\end{lemma}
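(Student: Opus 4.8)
The plan is to leverage the chain of reductions assembled in the discussion above and then reduce the statement to an explicit coefficient computation inside the classical Chevalley formula. That discussion already shows, via the projection formula and the injectivity of \((i_\lambda)_*\), that the product \(i_\lambda^*(D)\cdot[X_\mu]\) in \(A_*(X_\lambda)\) is determined by \(D\cdot (i_\lambda)_*([X_\mu])\) in \(A_*(X)\), and that under the identifications \(A_*(X)\cong H_*(X)\) and Poincar\'e duality on the smooth variety \(X\) this becomes a cup product in \(H^*(X)\). So the first step is simply to record that this cup product is exactly what the Fulton--Woodward Chevalley formula computes; all of the remaining work is combinatorial.

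Second, I would recall that \(D\) is the first Chern class of the ample generator \(\mathcal{L}\) of \(\Pic(X)\), which corresponds to the fundamental weight \(\omega\) dual to the cominuscule simple root \(\gamma\). The Chevalley formula then writes the product as a sum \(\sum_\alpha \langle\omega,\alpha^\vee\rangle\,\sigma_{w_\mu s_\alpha}\) over the positive roots \(\alpha\) for which \(w_\mu s_\alpha\) is a Bruhat cover of \(w_\mu\) in \(W^I\) (in the direction dictated by our homology convention), where \(\alpha^\vee=2\alpha/(\alpha,\alpha)\) is the coroot.

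Third, I would translate both the index set and the basis elements of this sum into the poset language of \(\cP_X\). By \Proposition{comin_properties}~(3), the covers of \(X_\mu\) are precisely the \(X_{\mu\setminus\{\alpha\}}\) with \(\alpha\in\mu\) such that \(\mu\setminus\{\alpha\}\) is again a lower order ideal, and moreover \(w_{\mu\setminus\{\alpha\}}=w_\mu s_\alpha\); this matches the summation range in the statement and identifies \(\sigma_{w_\mu s_\alpha}\) with \([X_{\mu\setminus\{\alpha\}}]\). It then remains to compute \(\langle\omega,\alpha^\vee\rangle\). Expanding \(\alpha=\sum_k c_k\alpha_k\) in simple roots and using that \(\langle\omega,\alpha_k^\vee\rangle=\delta_{\alpha_k,\gamma}\) gives
\[
        \langle\omega,\alpha^\vee\rangle=\frac{2(\omega,\alpha)}{(\alpha,\alpha)}=c_\gamma\,\frac{(\gamma,\gamma)}{(\alpha,\alpha)},
\]
where \(c_\gamma\) is the coefficient of \(\gamma\) in \(\alpha\). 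Since \(\gamma\) is cominuscule and \(\alpha\in\cP_X\) satisfies \(\gamma\le\alpha\le\theta\) for the highest root \(\theta\) (in which \(\gamma\) appears with coefficient \(1\)), the coefficient \(c_\gamma\) is forced to equal \(1\), producing exactly the factor \((\gamma,\gamma)/(\alpha,\alpha)\) claimed.

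The main obstacle I expect is not conceptual but bookkeeping: pinning down conventions so that the direction of the Bruhat order, the precise weight \(\omega\), and the normalization of \(D\) in Fulton--Woodward's formula all line up with the Chow/homology setup here, so that the surviving terms and their multiplicities emerge exactly as stated rather than off by a ratio of root lengths or indexed by the wrong covers. The one genuinely structural input is the cominuscule fact \(c_\gamma=1\) for every \(\alpha\in\cP_X\), and this is immediate from the definition of a cominuscule root once \(\gamma\le\alpha\le\theta\) is observed.
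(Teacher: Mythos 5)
Your proposal is correct and follows essentially the same route as the paper, which offers no independent proof of this lemma: it simply invokes Fulton--Woodward's Chevalley formula after the chain of identifications (projection formula, injectivity of \((i_\lambda)_*\), \(A_*(X)\cong H_*(X)\), Poincar\'e duality) laid out in the surrounding text. The extra detail you supply --- the computation \(\langle\omega,\alpha^\vee\rangle=c_\gamma\,(\gamma,\gamma)/(\alpha,\alpha)\) with \(c_\gamma=1\) forced by \(\gamma\le\alpha\le\theta\) and cominusculity, together with the translation of covers via \Proposition{comin_properties}(3) --- is accurate and makes explicit what the paper leaves to the citation.
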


Observe that \Lemma{chevalley} reinterprets the Chevalley formula as a degree lowering operator since intersection product with divisors is a map from $A_k(X_\lambda)$ to $A_{k-1}(X_\lambda)$. This is opposite to the standard presentation of the Chevalley formula as a degree raising operator in cohomology.

\begin{example}
        By \Lemma{chevalley}, the following calculations hold for \(X=\LG(3,6)\).  We refer to \Table{comin} for the poset $\cP_X$.
        \begin{equation*}
                D\cdot\bracks[\bigg]{X_{\tableau{8}{
                [a]&[a]s&[a]s\\
                []&[a]}}}=2\bracks[\bigg]{X_{\tableau{8}{
                [a]&[a]s\\
                []&[a]}}}+\bracks[\bigg]{X_{\tableau{8}{
                                                [a]&[a]s&
                                                [a]s}}},
        \end{equation*}
        \begin{equation*}
                D\cdot\bracks[\bigg]{X_{\tableau{8}{
                [a]&[a]s\\
                []&[a]}}}=\bracks[\bigg]{X_{\tableau{8}{
                                                [a]&[a]s}}},
        \end{equation*}
        \begin{equation*}
                D\cdot\bracks[\bigg]{X_{\tableau{8}{
                                                [a]&[a]s}}}=2\bracks[\bigg]{X_{\tableau{8}{[a]}}}.
        \end{equation*}
        \begin{remark}
                Note that a coefficient \(2\) occurs whenever the removed box (root) is short.  Otherwise the coefficient is \(1\).  This is due to the fact that cominuscule flag varieties only appear in Dynkin types that are at most ``doubly laced" (see \Table{poset}).
        \end{remark}
\end{example}

\begin{proof}[Proof of \Proposition{forward}]
        Let $\tilde X$ be a variety that is algebraically isomorphic to a cominuscule Schubert variety.  Let \(\tilde E:=\{E_1,\ldots, E_k\}\subseteq A_*(\tilde X)\) denote the set of minimal elements in the extremal rays of the effective cone.  By \Lemma{schub_classes}, these classes form the Schubert basis of $A_*(\tilde X)$. \Lemma{pic} implies there is a unique effective generator of $\Pic(\tilde X)$ which we denote by $Z$.  For any $E_i\in \tilde E$, consider the intersection product
        \[Z\cdot E_i=\sum_{j} c_{ij}\, E_j.\]
        Define a partial order on the set \(\tilde E\) via the covering relations
        \[
                E_j < E_i\text{ if and only if }c_{ij}\neq 0.
        \]
        If $\tilde X\simeq X_\lambda$ and \(i_\lambda: X_\lambda \hookrightarrow X=G/P_I\) is an embedding of a Schubert subvariety into a cominuscule flag variety, then $Z$ corresponds to $i_\lambda^*(D)$ under the identification \(\Pic(\tilde X) \simeq \Pic(X_\lambda)\).  \Lemma{chevalley} implies that the poset $\tilde E$ is isomorphic to the set of lower order ideals in \(\cP_X\) that is contained in \(\lambda\), ordered by inclusion. Hence, \(\tilde E\) can be identified with the Bruhat interval $\{u\in W^I: u\leq w_\lambda\}$ via \Proposition{comin_properties}.  Let $\tilde\lambda$ denote the sub-poset of join-irreducible elements in $\tilde E$. Our discussions in \Section{prelim} imply that $\tilde \lambda$ is poset isomorphic to $\lambda$. Hence, the poset is independent of the embedding \(i_\lambda: X_\lambda \hookrightarrow X\).

        We finish the proof by showing that the labeling of long/short roots can also be recovered from the Chevalley formula.  Let $E_i\in\tilde\lambda$ and hence $E_i$ is join-irreducible in the poset $\tilde E$.  First, if $E_i$ is the unique minimal element in \(\tilde \lambda\), then we label $E_i$ as long (this corresponds to the cominuscule simple root).  Otherwise, \Lemma{chevalley} implies that
        \[
                Z\cdot E_i=c_{ij} E_j
        \]
        for some unique $E_j\in \tilde E$ with $c_{ij}\neq 0$.  If $c_{ij}=1$, then we label $E_i$ as long.  If $c_{ij}\neq 1$, then we label $E_i$ as short.  If $\tilde X\simeq X_\lambda$, then \Lemma{chevalley} implies that this labeling of $\tilde\lambda$ corresponds to the labeling of long/short roots in $\lambda$.

        In conclusion, the labeled poset $\tilde\lambda$ only depends on the isomorphism class of $\tilde X$.  In particular, if two cominuscule Schubert varieties $X_\lambda$ and $Y_\mu$ are algebraically isomorphic, then $\lambda\simeq \mu$ as labeled posets.
\end{proof}

\subsection{Converse direction: the labeled poset \(\lambda\) determines the isomorphism class of \(X_\lambda\)}\label{sec:converse}

Let \(X=G/P_I\) be a cominuscule flag variety and $\lambda\subseteq\cP_X$ be a lower order ideal. In this section, we prove that the poset $\lambda$ and its labeling of long/short roots determine the isomorphism class of $X_\lambda$. More precisely, we prove the following proposition, which states the ``converse" direction of \Theorem{main}.

\begin{prop}\label{prop:converse}
        Let $X_\lambda\subseteq X$ and $Y_{\mu}\subseteq Y$ be cominuscule Schubert varieties indexed by lower order ideals \(\lambda\subseteq\cP_X\) and \(\mu\subseteq\cP_Y\), respectively. If $\lambda$ and $\mu$ are isomorphic as labeled posets, then $X_\lambda$ and $Y_{\mu}$ are algebraically isomorphic.
\end{prop}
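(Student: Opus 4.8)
The plan is to prove \Proposition{converse} by showing that each cominuscule Schubert variety $X_\lambda$ is isomorphic to a Schubert variety living in a canonical \emph{minimal} cominuscule flag variety $Z_\lambda$ built directly from the labeled poset $\lambda$, and then to produce, from an isomorphism $\lambda\cong\mu$ of labeled posets, an isomorphism of the two minimal ambient flag varieties that carries one Schubert variety onto the other. Combined, these give $X_\lambda\cong (Z_\lambda)_\lambda\cong (Z_\mu)_\mu\cong Y_\mu$.

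\textbf{Reduction to the support.} First I would pass from $X$ to the sub-diagram supporting $\lambda$. By \Lemma{red_decomp}, the element $w_\lambda$ admits a reduced decomposition $s_{\delta(\beta_\ell)}\cdots s_{\delta(\beta_1)}$, so its support is $J:=\{\delta(\beta):\beta\in\lambda\}\subseteq\Delta$ and $w_\lambda\in W_J$. The minimal box of $\lambda$ is $\gamma$ and $\delta(\gamma)=\gamma$, so $\gamma\in J$; taking the connected component of $\gamma$ in the sub-diagram $\cD_J$ yields a cominuscule pair $(\cD_J,\gamma)$ and a corresponding cominuscule flag variety $Z_\lambda$ in which $\lambda$ is a lower order ideal. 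Because $w_\lambda\in W_J$, the Schubert variety $X_\lambda$ is isomorphic to $(Z_\lambda)_\lambda$: the Bott--Samelson resolution attached to the reduced word above is assembled from the rank-one and rank-two parabolics indexed by $J$, which coincide whether computed in $G$ or in the Levi $G_J$, so the two normal Schubert varieties it resolves are isomorphic. This reduces \Proposition{converse} to the case where $X=Z_\lambda$ and $Y=Z_\mu$ are the minimal flag varieties of $\lambda$ and $\mu$.

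\textbf{Reconstructing the Dynkin data and lifting to a variety isomorphism.} The core step is to show that the labeled poset $\lambda$ determines the colored pair $(\cD_J,\gamma)$ up to isomorphism, independently of the original embedding $\lambda\subseteq\cP_X$. I would exhibit $\delta$ as a canonical coloring $\lambda\to J$ recoverable from the poset alone: adjacency of colors in $\cD_J$ is read off from the cover relations of $\lambda$, and the long/short labeling pins down the bond multiplicities (the only multiple bonds occurring for cominuscule types are the double bonds recorded by the short-root decorations, cf.\ \Table{poset}). Running this reconstruction for $\lambda$ and for $\mu$, an isomorphism of labeled posets $\lambda\cong\mu$ then yields an isomorphism of colored Dynkin diagrams $(\cD_{J_\lambda},\gamma_\lambda)\cong(\cD_{J_\mu},\gamma_\mu)$ preserving the cominuscule node and the image of $\lambda$. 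Such a diagram isomorphism lifts to an isomorphism of the reductive groups carrying one torus and Borel to the other, hence to an isomorphism of flag varieties $Z_\lambda\cong Z_\mu$ that maps Schubert varieties to Schubert varieties via the induced bijection $W^{I_\lambda}\cong W^{I_\mu}$. Chasing $w_\lambda\mapsto w_\mu$ through this bijection---exactly the identification of lower order ideals coming from $\lambda\cong\mu$---shows $(Z_\lambda)_\lambda$ is carried isomorphically onto $(Z_\mu)_\mu$.

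\textbf{Main obstacle.} I expect the combinatorial reconstruction to be the crux: one must verify, in a type-independent way, that the \emph{labeled} poset alone recovers the colored sub-diagram $(\cD_J,\gamma)$ (including multiple bonds) and that this is insensitive to the embedding. The delicate points are the disconnected-looking posets (e.g.\ the even quadric) and the exceptional types, where one must confirm that the poset-local coloring rule together with the short-root labels genuinely determines the Dynkin adjacencies and multiplicities, and that the resulting minimal $(\cD_J,\gamma)$ is connected and cominuscule. The reduction step is comparatively standard, but still requires care that passing to the Levi $G_J$ is an isomorphism of Schubert varieties in the \emph{parabolic} (not merely full-flag) setting.
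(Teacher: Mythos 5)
Your overall strategy coincides with the paper's: reduce to a minimal cominuscule flag variety supported on $J=\delta(\lambda)$, reconstruct the colored Dynkin diagram from the labeled poset, and lift a poset isomorphism to a flag-variety isomorphism. However, the step you yourself flag as the crux is genuinely missing, and it is the heart of the proof. You assert that ``adjacency of colors in $\cD_J$ is read off from the cover relations of $\lambda$,'' but the coloring $\delta(\alpha)=-w_{\lambda_\alpha}.\alpha$ is defined via the ambient Weyl group, and a general cover relation $\alpha\lessdot\beta$ in $\lambda$ tells you nothing directly about whether $\delta(\alpha)$ and $\delta(\beta)$ are adjacent, equal, or unrelated in $\cD_X$ (most boxes of $\cP_X$ carry repeated colors). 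What is needed is a canonical sub-poset of $\lambda$, definable from the labeled poset alone, on which $\delta$ is injective and whose cover relations \emph{do} encode Dynkin adjacency. The paper supplies this with the notion of a Dynkin chain (\Definition{Pdelta}): a chain that is a lower order ideal with weakly decreasing root lengths. \Lemma{biject} shows $\delta$ restricts to a bijection from the union $\cP_X^\Delta$ of such chains onto $\Delta_X$ --- the injectivity requires a nontrivial root-system computation ruling out backtracking walks --- and \Corollary{restrict} and \Proposition{group_restriction} then show that a labeled poset isomorphism $\lambda\cong\mu$ restricts to $\lambda\cap\cP_X^\Delta\cong\mu\cap\cP_Y^\Delta$ and hence transports to a diagram isomorphism $\cD_X^\lambda\cong\cD_Y^\mu$. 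Without some such device your ``reconstruction'' is a restatement of the goal rather than an argument; note also that connectedness of $\cD_X^\lambda$, which you wave at by ``taking the connected component of $\gamma$,'' is itself proved via the Dynkin chain description.

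A secondary issue: your justification of the reduction $X_\lambda\cong (Z_\lambda)_\lambda$ via a common Bott--Samelson resolution is not valid as stated --- two non-isomorphic normal varieties can share a resolution, so normality plus a common birational source does not yield an isomorphism. The correct argument (this is \Lemma{isom}, quoted from Richmond--Slofstra) is that the inclusion $G'/P'\hookrightarrow G/P$ restricts to a proper bijective birational morphism $X'_{w_\lambda}\to X_\lambda$ onto a normal target, which is then an isomorphism by Zariski's main theorem. The conclusion you want is true and citable, but your stated reason for it would not survive scrutiny.
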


Our strategy is to embed \(X_\lambda\) in a ``minimal" flag variety \(X'\) determined by the labeled poset \(\lambda\).

Recall that \(S\) is the set of simple reflections defined in \Section{prelim}.

\begin{defn}
        The \emph{support} of \(\lambda\) is defined as
        \[
                S(\lambda)\coloneqq\{s\in S: s\leq w_\lambda\}.
        \]
        Equivalently, \(S(\lambda)\) is the set of simple reflections appearing in any reduced decomposition of \(w_\lambda\).
\end{defn}

Every reduced decomposition of \(w_\lambda\), and in particular, \(S(\lambda)\), can be read out from the poset \(\lambda\) \cite[Section 4]{buch2022positivity}. The variety \(X'\) is constructed using \(S(\lambda)\) as follows. Let \(G'\) be the reductive subgroup of \(P_{S(\lambda)}\) with Weyl group \(W'\coloneqq W_{S(\lambda)}\) and \(P'\coloneqq G'\cap P_I\) be the reductive subgroup of \(G'\) corresponding to \(I'\coloneqq I\cap S(\lambda)\). Set \(X'\coloneqq G'/P'\). Note that \(w_\lambda\in {W'}^{I'}\).

\Lemma{isom} below is a restatement of \cite[Lemma 4.8]{MR3552231}.

\begin{lemma}[Richmond--Slofstra]\label{lemma:isom}
        The inclusion \(X'\hookrightarrow X\) induces an isomorphism \(X'_{w_\lambda}\to X_\lambda\).
\end{lemma}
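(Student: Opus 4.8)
The plan is to compare Schubert cells directly and then pass to closures, using the support condition to force everything into the smaller group \(G'\).

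First I would set up the embedding. Since \(P' = G'\cap P_I\) by construction, the assignment \(gP'\mapsto gP_I\) is well defined and gives a \(G'\)-equivariant morphism \(\iota\colon X' = G'/P' \to G/P_I = X\). It is injective, because \(g_1P_I = g_2P_I\) with \(g_1,g_2\in G'\) forces \(g_2^{-1}g_1\in G'\cap P_I = P'\). As \(X'\) is a projective flag variety and \(P'\) is the full stabilizer of the base point, \(\iota\) is a closed embedding by the standard theory of flag varieties.

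Next I would identify the cells. Let \(U\) be the unipotent radical of \(B\) and \(U_\alpha\subseteq G\) the root subgroup of \(\alpha\in R\). For \(w\in W^I\), the Schubert cell \(C_w := BwP_I/P_I\) has the parametrization \(\bigl(\prod_\alpha U_\alpha\bigr)\, wP_I/P_I \cong \C^{\ell(w)}\), where the product runs over the positive roots \(\alpha\) with \(w^{-1}.\alpha<0\) (a set of size \(\ell(w) = |\lambda(w)|\)), and \(X_\lambda = \overline{C_{w_\lambda}}\). The key observation is that all of these roots lie in the sub-root-system \(R' := R\cap\Span_{\Z}\{\alpha\in\Delta : s_\alpha\in S(\lambda)\}\) of \(G'\). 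Indeed, by \Lemma{red_decomp}, \(w_\lambda\) has a reduced decomposition using only simple reflections from \(S(\lambda)\), so \(w_\lambda\in W' = W_{S(\lambda)}\); hence every positive root sent to a negative root by \(w_\lambda^{-1}\) lies in \(R'\), and the corresponding \(U_\alpha\) all lie in \(G'\). Thus \(C_{w_\lambda}\subseteq \iota(X')\). On the other side, since \(w_\lambda\in{W'}^{I'}\), the cell \(C'_{w_\lambda} := B'w_\lambda P'/P'\) of \(X'\) (with \(B' = B\cap G'\)) is parametrized by the very same root subgroups, because the inversion set of \(w_\lambda\) computed inside \(R'\) coincides with the one computed inside \(R\). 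Therefore \(\iota\) restricts to an isomorphism \(C'_{w_\lambda}\xrightarrow{\sim} C_{w_\lambda}\).

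Finally I would pass to closures. Because \(\iota\) is a closed embedding, it commutes with Zariski closure, so
\[
        \iota(X'_{w_\lambda}) = \iota\bigl(\overline{C'_{w_\lambda}}\bigr) = \overline{\iota(C'_{w_\lambda})} = \overline{C_{w_\lambda}} = X_\lambda.
\]
The restriction \(\iota|_{X'_{w_\lambda}}\colon X'_{w_\lambda}\to X_\lambda\) is then a closed embedding whose image is all of \(X_\lambda\), hence an isomorphism. The hard part is the middle step: showing that the Schubert cell of \(X_\lambda\) really lands inside the smaller flag variety \(X'\) and that the two cell parametrizations agree. This is exactly where the support condition enters through \Lemma{red_decomp}, since \(w_\lambda\in W'\) forces all inversion roots into \(R'\); once the cells are identified, the passage to closures is formal.
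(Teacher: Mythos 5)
Your argument is correct, and it is essentially a reconstruction of the proof of the result being cited: the paper itself gives no proof of this lemma, deferring entirely to Richmond--Slofstra [Lemma 4.8, MR3552231], and the standard proof of that lemma is exactly the cell comparison you carry out. The key step is the one you identify: since $w_\lambda$ has a reduced word supported on $S(\lambda)$ (\Lemma{red_decomp}), we have $w_\lambda\in W'$, so the inversion set of $w_\lambda^{-1}$ lies in the sub-root-system $R'$, the root subgroups $U_\alpha$ parametrizing $Bw_\lambda P_I/P_I$ all lie in $G'$, and the two cell parametrizations agree; passing to closures under a closed embedding is then formal. The one place you lean on a slightly glib justification is the claim that $\iota$ is a closed embedding ``by the standard theory of flag varieties'': injectivity of a morphism from a projective variety does not by itself give a closed embedding. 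The correct justification is that $\iota$ is the orbit map $G'/P'\to G'\cdot eP_I$ for the $G'$-action on $X$, with $P'=G'\cap P_I$ the full (reduced, as we are in characteristic zero) stabilizer of the base point, so the orbit map is an isomorphism onto its orbit, and the orbit is closed because $G'/P'$ is complete. With that repair the proof is complete and matches the cited argument.
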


Let \(Y\) be another cominuscule flag variety and \(\mu\subseteq\cP_Y\) be a lower order ideal. Next, we show that a labeled poset isomorphism between \(\lambda\) and \(\mu\) induces an isomorphism between \(X'\) and \(Y'\), which restricts to an isomorphism between \(X_{\lambda}\) and \(Y_{\mu}\). We shall see that \(X'\) and \(Y'\) are cominuscule and that this isomorphism is given by an isomorphism of their Dynkin diagrams.

In the following, let $\cD_X$ be the Dynkin diagram of \(X\) with vertex set \(\Delta_X\).

\begin{defn}
        The diagram \(\cD_X^\lambda\) is defined to be the full subgraph of \(\cD_X\) with vertex set
        \[
                \Delta_X^\lambda\coloneqq\{\alpha\in\Delta_X: s_\alpha\in S(\lambda)\}.
        \]
\end{defn}

\begin{defn}\label{defn:Pdelta}
        Let a \emph{Dynkin chain} in \(\cP_X\) be a chain \(\pi\subseteq\cP_X\) such that:
        \begin{enumerate}
                \item the set \(\pi\) is a lower order ideal;
                \item the lengths of roots in \(\pi\) are weakly decreasing.
        \end{enumerate}

        The lower order ideal \(\cP_X^\Delta\subseteq\cP_X\) is defined to be the union of all Dynkin chains in \(\cP_X\).
\end{defn}

In the proof of \Lemma{biject}, we shall see that Dynkin chains in \(\cP_X\) correspond to paths in \(\cD_X\) starting from the cominuscule root~\(\gamma\). Examples of \(\cP_X^\Delta\) are illustrated in \Figure{DChains}.

\begin{lemma}\label{lemma:biject}
        The restriction \(\delta:\cP_X^\Delta\to\Delta_X\) is a bijection.
\end{lemma}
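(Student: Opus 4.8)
The plan is to show that $\delta$ sets up a bijection between the Dynkin chains in $\cP_X$ and the paths in $\cD_X$ emanating from the cominuscule vertex $\gamma$, and then to deduce the lemma from the fact that a tree has a unique path between any two of its vertices (the Dynkin diagram $\cD_X$ of every cominuscule type is a connected tree). Throughout I will use that the Weyl group preserves root lengths, so that $|\delta(\beta)|=|\beta|$ for every box $\beta$, because $\delta(\beta)$ is a Weyl translate of $\beta$; in particular the long/short labeling of $\cP_X$ agrees with the labeling of the values $\delta(\beta)$. The key reduction is that if $\pi=\{\gamma=\beta_1\lessdot\cdots\lessdot\beta_k\}$ is a saturated chain that is a lower order ideal, then the principal ideal $\lambda_{\beta_i}$ equals the prefix $\pi_i:=\{\beta_1,\dots,\beta_i\}$, since $\pi$ is both a chain and a lower order ideal. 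Hence \Lemma{red_decomp} gives $w_{\pi_i}=s_{\delta(\beta_i)}w_{\pi_{i-1}}$, and \Proposition{comin_properties}(3) yields the localized formulas $\delta(\beta_i)=-w_{\pi_i}.\beta_i=w_{\pi_{i-1}}.\beta_i$. In particular $\delta(\gamma)=\gamma$.

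For the forward map (Dynkin chain $\to$ path), write $\sigma_i:=\beta_i-\beta_{i-1}$, a positive simple root since $\beta_i$ covers $\beta_{i-1}$. The localized formulas give $w_{\pi_{i-1}}.\sigma_i=\delta(\beta_i)+\delta(\beta_{i-1})$. Since the left side is a root, the two simple roots $\delta(\beta_{i-1}),\delta(\beta_i)$ are distinct and adjacent in $\cD_X$, so $\delta$ carries $\pi$ to a walk in $\cD_X$ starting at $\gamma$. To rule out backtracking, suppose $\delta(\beta_{i+1})=\delta(\beta_{i-1})=:s$ with $\delta(\beta_i)=:t$ adjacent to $s$. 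Substituting $w_{\pi_i}=s_t w_{\pi_{i-1}}$ into $\delta(\beta_{i+1})=w_{\pi_i}.\beta_{i+1}$ and using $s_t(s+t)=s+(m-1)t$ with $m:=-\langle s,t^\vee\rangle\in\{1,2\}$ gives $\sigma_{i+1}=w_{\pi_{i-1}}^{-1}\bigl(s+(m-1)t\bigr)$. When $m=1$ this equals $-\beta_{i-1}<0$, contradicting that $\sigma_{i+1}$ is a positive simple root; when $m=2$ (which occurs only at the unique double edge, forcing $s$ long and $t$ short) one computes $\sigma_{i+1}=\sigma_i$, so $|\beta_{i-1}|,|\beta_i|,|\beta_{i+1}|$ read long, short, long, violating the weakly decreasing length condition of a Dynkin chain. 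Thus no backtrack is possible, and a non-backtracking walk in a tree is a simple path; so $\delta|_\pi$ is injective with image the vertex set of a path from $\gamma$.

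For the reverse map (path $\to$ Dynkin chain), given a path $(\gamma=v_0,v_1,\dots,v_m)$ in $\cD_X$ I set $\beta_0=\gamma$ and $\beta_{i+1}:=w_{\pi_i}^{-1}(v_{i+1})$; uniqueness of a Dynkin chain lying over a given path is immediate from this formula. Existence is the claim that $s_{v_{i+1}}w_{\pi_i}$ is again a minimal coset representative of length one greater whose inversion set is $\pi_i\cup\{\beta_{i+1}\}$, so that $\beta_{i+1}$ is a box covering $\beta_i$ and $\pi_i\cup\{\beta_{i+1}\}$ is a lower order ideal; this is exactly the reduced-word bookkeeping behind \Lemma{red_decomp} (see also \cite{buch2022positivity}*{Section 4}). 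The weakly decreasing length condition is automatic: in every cominuscule type $\gamma$ is placed so that a path from $\gamma$ crosses the (at most one) double edge of $\cD_X$ only in the long-to-short direction, and $|\beta_i|=|v_i|$ transports this to $\pi$.

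Combining the two correspondences, $\delta$ is a bijection between Dynkin chains and paths from $\gamma$. Surjectivity of $\delta|_{\cP_X^\Delta}$ then follows because the tree $\cD_X$ is connected, so every vertex $v$ lies on a path from $\gamma$, whose lift is a Dynkin chain with top box in $\cP_X^\Delta$ mapping to $v$. For injectivity, if $\beta\neq\beta'$ in $\cP_X^\Delta$ satisfy $\delta(\beta)=\delta(\beta')=v$, each lies on a Dynkin chain, and truncating at its $v$-labeled box gives a Dynkin chain (a prefix of a Dynkin chain is one) whose $\delta$-image is the unique $\gamma$-to-$v$ path in the tree; uniqueness of lifts forces the truncations, hence $\beta$ and $\beta'$, to coincide. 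I expect the main obstacle to be the existence half of the reverse correspondence — verifying that $\beta_{i+1}=w_{\pi_i}^{-1}(v_{i+1})$ genuinely extends the lower order ideal — together with the $m=2$ case of the non-backtracking argument, which is the one place where the weakly decreasing length hypothesis is indispensable.
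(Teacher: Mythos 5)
Your argument is correct in substance, but it is organized quite differently from the paper's, and the comparison is instructive. The paper proves surjectivity by an explicit construction (the partial sums $\gamma, \gamma+\beta_1,\dots,\gamma+\beta_1+\cdots+\beta_m$ along a path form a Dynkin chain whose top box maps to $\beta_m$) and proves injectivity by combining an external fact — that $\delta(\beta)=\delta(\beta')$ forces $\beta,\beta'$ comparable, from \cite{buch2022positivity}*{Remark 4.2(b)} — with the statement that $\delta$ is injective on each individual Dynkin chain, established via the same walk/no-backtracking computation you perform. You instead prove the stronger statement that $\delta$ induces a bijection between Dynkin chains and paths in $\cD_X$ emanating from $\gamma$, and then deduce both halves of the lemma from the uniqueness of paths in a tree; this cleanly avoids the appeal to Remark 4.2(b), since your truncation argument replaces the comparability step. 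Your localized formulas $\delta(\beta_i)=w_{\pi_{i-1}}.\beta_i$ are exactly the paper's equations \eqn{gammas}, and your no-backtracking analysis reaches the same contradiction; the only stylistic difference there is that the paper first uses the weakly-decreasing-length hypothesis to force the bond strength to be $1$ and then computes $\gamma_{i+1}-\gamma_i=-\gamma_{i-1}$, whereas you split into the cases $m=1$ and $m=2$ and dispatch the double-edge case by the long/short/long length violation — both are valid and the length hypothesis is used in the same essential way.

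The one soft spot, which you correctly flag yourself, is the existence half of your reverse correspondence: the claim that $s_{v_{i+1}}w_{\pi_i}$ is again a length-additive minimal coset representative whose inversion set is $\pi_i\cup\{w_{\pi_i}^{-1}(v_{i+1})\}$ does not follow from \Lemma{red_decomp} as stated (that lemma converts ideals to reduced words, not words to ideals). The clean repair is the paper's explicit construction: take $\beta_i:=\gamma+v_1+\cdots+v_i$; each partial sum is a root because the induced subdiagram on a path is a path, the set of partial sums is a lower order ideal because the support of any root below $\beta_i$ must be a connected subset of the path containing $\gamma$ and hence a prefix, and $\delta(\beta_i)=v_i$ then follows from your localized formula. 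With that substitution (and noting that uniqueness of lifts already follows from $\beta_{i+1}=w_{\pi_i}^{-1}(v_{i+1})$ without any existence claim), your proof is complete; your type-check that paths from $\gamma$ cross the double edge only long-to-short is also implicitly needed by the paper when it asserts the partial-sum chain satisfies the weakly-decreasing condition.
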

\begin{proof}
        Note that the cominuscule root \(\gamma\) is the unique minimal root in \(\cP_X\) and \(\delta(\gamma)=\gamma\). Also, since \(\gamma\) is a long root, it does not obstruct condition (2) in \Definition{Pdelta} and is an element of any non-empty Dynkin chain. Moreover, if \(\gamma, \beta_1,\dots,\beta_{m-1},\beta_m\) are distinct simple roots along a path in \(\cD_X\), then
        \[
                \gamma<(\gamma+\beta_1)<\dots<(\gamma+\beta_1+\dots+\beta_m)
        \]
        is a Dynkin chain in \(\cP_X\) and \(\delta(\gamma+\beta_1+\dots+\beta_m)=\beta_m\). This proves surjectivity.

        To prove injectivity, note that for \(\beta\), \(\beta'\) in \(\cP_X\), if \(\delta(\beta)=\delta(\beta')\), then \(\beta\) and \(\beta'\) are comparable \cite[Remark 4.2(b)]{buch2022positivity}. Since Dynkin chains are lower order ideals (\Definition{Pdelta} part (1)), it suffices to prove that each Dynkin chain in \(\cP_X^\Delta\) maps injectively to \(\Delta_X\). Let \(
        \gamma_0<\gamma_1 < \gamma_2 < \dots < \gamma_k
        \) be a Dynkin chain in \(\cP_X^\Delta\). (While we will not need it, we must have \(\gamma_0=\gamma\) because a Dynkin chain is a lower order ideal and \(\gamma\) is the unique minimal root in \(\cP_X\).) By \cite[Remark 4.2(c)]{buch2022positivity} we have that \(\delta(\gamma_0), \delta(\gamma_1), \dots, \delta(\gamma_k)\) forms a walk on \(\cD_X\). For a contradiction, assume this walk is not a path. Since \(\cD_X\) does not contain any cycles, we can assume without loss of generality that \(\delta(\gamma_{i-1})=\delta(\gamma_{i+1})\) for some \(i\), where \(0< i<k\). By definition,
        \[
                \delta(\gamma_j)=-w_j.\gamma_j\text{ for all }j,
        \]
        where \(w_j\coloneqq w_{\lambda_{\gamma_j}}\). By \Lemma{red_decomp}, we have
        \[
                w_i=s_{\delta(\gamma_i)}w_{i-1}\text{\quad and\quad  }w_{i+1}=s_{\delta(\gamma_{i+1})}s_{\delta(\gamma_i)}w_{i-1}.
        \]
        Therefore,
        \begin{equation}\label{eqn:gammas}
                \begin{aligned}
                        \gamma_{i-1} & =-w_{i-1}^{-1}.\delta(\gamma_{i-1})=-w_{i-1}^{-1}.\delta(\gamma_{i+1}),                                                                \\
                        \gamma_i     & =-w_{i-1}^{-1}s_{\delta(\gamma_i)}.\delta(\gamma_i)=w_{i-1}^{-1}.\delta(\gamma_i),                                                     \\
                        \gamma_{i+1} & =-w_{i-1}^{-1}s_{\delta(\gamma_i)}s_{\delta(\gamma_{i+1})}.\delta(\gamma_{i+1})=w_{i-1}^{-1}s_{\delta(\gamma_i)}.\delta(\gamma_{i+1}).
                \end{aligned}
        \end{equation}
        Note that
        \(
        (\delta(\gamma_j),\delta(\gamma_j))=(\gamma_j,\gamma_j)
        \) for all \(j\). Hence, using \eqn{gammas} we have
        \[
                (\delta(\gamma_{i-1}),\delta(\gamma_{i-1}))=(\delta(\gamma_{i+1}),\delta(\gamma_{i+1})),\]and by the decreasing condition on lengths (\Definition{Pdelta}), we must have \[(\delta(\gamma_{i-1}),\delta(\gamma_{i-1}))=(\delta(\gamma_i),\delta(\gamma_i))=(\delta(\gamma_{i+1}),\delta(\gamma_{i+1})).\] Since the simple roots \(\delta(\gamma_{i+1})\) and \(\delta(\gamma_i)\) are adjacent in the Dynkin diagram and of the same length,
        \[
                \langle\delta(\gamma_{i+1}),\delta(\gamma_i)\rangle\coloneqq\frac{2(\delta(\gamma_{i+1}),\delta(\gamma_i))}{(\delta(\gamma_i),\delta(\gamma_i))}=-1.
        \]
        As a consequence,
        \begin{equation}\label{eqn:deltas}
                s_{\delta(\gamma_i)}.\delta(\gamma_{i+1})=\delta(\gamma_{i+1})-\langle\delta(\gamma_{i+1}),\delta(\gamma_i)\rangle\delta(\gamma_i)=\delta(\gamma_{i+1})+\delta(\gamma_i).
        \end{equation}
        By \eqn{gammas} and \eqn{deltas}, we have
        \[
                \gamma_{i+1}-\gamma_{i}=w_{i-1}^{-1}.\delta(\gamma_{i+1})=-\gamma_{i-1},
        \]
        which is a contradiction.
\end{proof}

\begin{figure}[!ht]
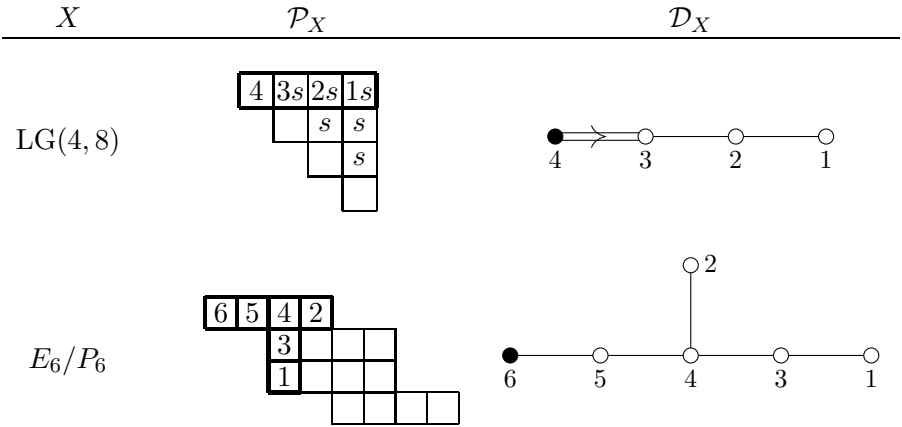

        \caption{We highlight \(\cP_X^\Delta\subset \cP_X\) with a bold border and label its boxes by the images of \(\delta\).
        }\label{fig:DChains}
        \centering

        \begin{tabular}{ccc}
                $X$        & $\cP_X$                                                                                                  & $\cD_X$                                    \\ \hline & & \\
                $\LG(4,8)$ & $\tableau{13}{
                [aHL] 4    & [aH] 3s                                                                                                  & [aH] 2s & [aHR] 1s                         \\
                []         & [a]                                                                                                      & [a]s    & [a] s                            \\
                []         & []                                                                                                       & [a]     & [a]s                             \\
                []         & []                                                                                                       & []      & [a]
                } $        & $\begin{dynkinDiagram}[text style/.style={scale=0.9},labels={1,2,3,4},scale=2, backwards]
                                              {C}{ooo*}
                                      \end{dynkinDiagram}$                                               \\ & & \\
                $E_6/P_6$  & \qquad  $\tableau{12}{
                [Aa]6      & [Aa]5                                                                                                    & [Aa]4   & [Aa]2                            \\
                []         & []                                                                                                       & [Aa]3   & [a]      & [a] & [a]             \\
                []         & []                                                                                                       & [Aa]1   & [a]      & [a] & [a]             \\
                []         & []                                                                                                       & []      & []       & [a] & [a] & [a] & [a]
                }$  \qquad & $\begin{dynkinDiagram}[text style/.style={scale=0.9},labels={6,2,5,4,3,1},scale=2]
                                              {E}{*ooooo}
                                      \end{dynkinDiagram}$
        \end{tabular}
\end{figure}

\begin{cor}\label{cor:restrict}
        Let \(\lambda\subseteq\cP_X\) be a lower order ideal. Then \(\delta(\lambda\cap\cP_X^\Delta)=\Delta_X^\lambda\), and \(\cD_X^\lambda\) is a connected Dynkin diagram.
\end{cor}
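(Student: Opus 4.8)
The plan is to deduce both assertions from a single structural fact about the coloring map $\delta$: that the distinguished representative in $\cP_X^\Delta$ of each simple root is the \emph{minimum} box of its color. I would begin by recording that $\Delta_X^\lambda=\delta(\lambda)$. By \Lemma{red_decomp}, $s_{\delta(\beta_\ell)}\cdots s_{\delta(\beta_1)}$ is a reduced word for $w_\lambda$, so the simple reflections occurring in it, which constitute $S(\lambda)$, are exactly $\{s_{\delta(\beta)}:\beta\in\lambda\}$; hence $\Delta_X^\lambda=\{\alpha\in\Delta_X:s_\alpha\in S(\lambda)\}=\delta(\lambda)$. The inclusion $\delta(\lambda\cap\cP_X^\Delta)\subseteq\Delta_X^\lambda$ is then immediate, and the content of the first claim is the reverse inclusion.

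The key step, which I expect to be the main obstacle, is to prove that for every $\alpha\in\Delta_X$ the unique box $c(\alpha)\in\cP_X^\Delta$ with $\delta(c(\alpha))=\alpha$ is the \emph{minimum} of the fiber $\delta^{-1}(\alpha)$. I would establish this by weighing heights against distances in the tree $\cD_X$. Since the cominuscule root $\gamma$ is simple and covers in the root poset add a single simple root, height is the rank function on $\cP_X$, so every saturated chain from $\gamma$ to a box $\beta$ has exactly $\operatorname{ht}(\beta)-1$ steps; applying $\delta$ to its covers produces a walk in $\cD_X$ from $\gamma$ to $\delta(\beta)$ by \cite[Remark 4.2(c)]{buch2022positivity}, whence $\operatorname{ht}(\beta)-1\ge\dist(\gamma,\delta(\beta))$. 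By the proof of \Lemma{biject}, the box $c(\alpha)$ is the top of a Dynkin chain whose $\delta$-image is a genuine \emph{path} from $\gamma$ to $\alpha$; being a geodesic in the tree, this forces the equality $\operatorname{ht}(c(\alpha))-1=\dist(\gamma,\alpha)$. If some $\beta\in\delta^{-1}(\alpha)$ had $\beta<c(\alpha)$, then $\operatorname{ht}(\beta)<\operatorname{ht}(c(\alpha))$ would yield a walk from $\gamma$ to $\alpha$ shorter than $\dist(\gamma,\alpha)$, a contradiction. The subtle point is exactly that Dynkin chains map to geodesic paths and not merely to walks; this is where the decreasing-length condition of \Definition{Pdelta} is essential, since it is what excludes backtracking in \Lemma{biject}.

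Granting this lemma, both parts follow quickly. For the first, given $\alpha\in\Delta_X^\lambda=\delta(\lambda)$, pick $\beta\in\lambda$ with $\delta(\beta)=\alpha$; by the lemma $c(\alpha)\le\beta$, so $c(\alpha)\in\lambda$ as $\lambda$ is a lower order ideal, and $c(\alpha)\in\cP_X^\Delta$ by construction, giving $\alpha\in\delta(\lambda\cap\cP_X^\Delta)$ and hence $\Delta_X^\lambda=\delta(\lambda\cap\cP_X^\Delta)$. For connectedness, observe that $\gamma\in\Delta_X^\lambda$ whenever $\lambda\ne\emptyset$, since $\gamma=\min\cP_X\in\lambda$ and $\delta(\gamma)=\gamma$. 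For any $\alpha\in\Delta_X^\lambda$, the box $c(\alpha)$ produced above lies in $\lambda$, so the whole Dynkin chain $\gamma=\gamma_0<\cdots<\gamma_m=c(\alpha)$ beneath it lies in $\lambda$; its $\delta$-image is the geodesic path $\gamma,\alpha_1,\dots,\alpha_m=\alpha$, and every vertex $\alpha_j=\delta(\gamma_j)$ lies in $\delta(\lambda)=\Delta_X^\lambda$. Thus this path lies in the full subgraph $\cD_X^\lambda$ and joins $\alpha$ to $\gamma$; as every vertex is joined to $\gamma$, the diagram $\cD_X^\lambda$ is connected.
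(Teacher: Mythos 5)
Your overall route is the same as the paper's: both arguments reduce the first claim to the fact that, for each $\alpha\in\Delta_X$, the unique box of $\cP_X^\Delta$ colored $\alpha$ is the \emph{minimum} of the fiber $\delta^{-1}(\alpha)$ (the paper gets this by citing \cite[Remark 4.2(b)]{buch2022positivity} for a unique minimal root in each fiber and then locating it in $\cP_X^\Delta$ via \Lemma{biject}, while you re-derive it by weighing heights in $\cP_X$ against distances in the tree $\cD_X$), and both obtain connectedness from the observation that Dynkin chains contained in $\lambda$ map under $\delta$ to paths in $\cD_X$ issuing from $\gamma$. Your height-versus-distance computation is a correct and more self-contained justification of a step the paper leaves terse.

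There is, however, one small but real gap in your key step. Your contradiction argument only shows that no $\beta\in\delta^{-1}(\alpha)$ satisfies $\beta<c(\alpha)$ (equivalently, that $c(\alpha)$ has minimal height in its fiber); it does not by itself give $c(\alpha)\le\beta$ for \emph{every} $\beta$ in the fiber, which is exactly what you use afterwards to deduce $c(\alpha)\in\lambda$ from $\beta\in\lambda$. A priori $\beta$ could be incomparable to $c(\alpha)$. The missing ingredient is \cite[Remark 4.2(b)]{buch2022positivity}, already quoted in the proof of \Lemma{biject}: boxes with the same $\delta$-image are comparable, so each fiber is a chain and a minimal (or minimal-height) element is automatically the minimum. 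With that one citation added, your proof closes and agrees with the paper's.
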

\begin{proof}
        By \cite{buch2022positivity}*{Remark 4.2.(b)}, for each \(\alpha\in\Delta_X\), there is a unique minimal root \(\beta\in \cP_X\) such that \(\delta(\beta)=\alpha\). From \Lemma{biject} we see that \(\beta\in\cP_X^\Delta\). It follows that \(\delta(\lambda)=\delta(\lambda\cap\cP_X^\Delta)\). By \Lemma{red_decomp}, we have \(\delta(\lambda)=\Delta_X^\lambda\). Therefore, \(\delta(\lambda\cap\cP_X^\Delta)=\Delta_X^\lambda\). Note that \(\lambda\cap\cP_X^\Delta\) is exactly the union of Dynkin chains contained in \(\lambda\), and by the proof of \Lemma{biject}, the map \(\delta\) sends each Dynkin chain to a path in \(\cD_X\) starting from \(\gamma\). Therefore, \(\cD_X^\lambda\) is connected.
\end{proof}

\begin{remark}
        \Corollary{restrict} implies that \(X'\) is the cominuscule flag variety given by the pair \((\cD_X^\lambda, \gamma)\).
\end{remark}

The last ingredient is \Proposition{group_restriction}, a purely combinatorial result. Geometrically, it implies that the ``minimal'' cominuscule flag varieties for Schubert varieties with isomorphic labeled posets are isomorphic.

\begin{prop}\label{prop:group_restriction}
        Let \(\lambda\subseteq\cP_X\) and \(\mu\subseteq\cP_Y\) be lower order ideals. Then every labeled poset isomorphism between \(\lambda\) and \(\mu\) induces a graph isomorphism between \(\cD_X^\lambda\) and \(\cD_Y^\mu\) that identifies reduced decompositions of \(w_\lambda\) and \(w_\mu\).
\end{prop}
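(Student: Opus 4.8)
The plan is to transport the combinatorial data through the bijection $\delta\colon\cP_X^\Delta\to\Delta_X$ of \Lemma{biject}, exploiting the fact that the subset $\cP_X^\Delta$ is intrinsic to the labeled poset. First I would observe that $\lambda\cap\cP_X^\Delta$ can be described without reference to the ambient root system: it is the union of the \emph{Dynkin chains} contained in $\lambda$, and by \Definition{Pdelta} a Dynkin chain is exactly a chain that is a lower order ideal and along which the long/short labels are weakly decreasing (``long before short''). Since $\lambda$ is itself a lower order ideal, ``lower order ideal of $\cP_X$ contained in $\lambda$'' coincides with ``lower order ideal of $\lambda$,'' so this description uses only the order relation and the labeling of $\lambda$. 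Hence a labeled poset isomorphism $\phi\colon\lambda\to\mu$ carries Dynkin chains to Dynkin chains and restricts to a poset isomorphism $\lambda\cap\cP_X^\Delta\to\mu\cap\cP_Y^\Delta$. Composing with $\delta$, which by \Corollary{restrict} maps these sets onto the vertex sets $\Delta_X^\lambda$ and $\Delta_Y^\mu$, produces a bijection $\psi\coloneqq\delta_Y\circ\phi\circ\delta_X^{-1}\colon\Delta_X^\lambda\to\Delta_Y^\mu$.

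Next I would upgrade $\psi$ to a graph isomorphism. The proof of \Lemma{biject} shows that $\delta$ sends each Dynkin chain to a path in $\cD_X$ issuing from $\gamma$, with consecutive boxes differing by a single simple root; consequently the Hasse diagram of the induced subposet $\cP_X^\Delta$ is identified by $\delta$ with the tree $\cD_X$, covering relations corresponding to edges. Thus an edge of $\cD_X^\lambda$ pulls back to a covering relation in $\lambda\cap\cP_X^\Delta$, which $\phi$ preserves, so $\psi$ preserves edges. Because $(\delta(\beta),\delta(\beta))=(\beta,\beta)$, a double bond occurs precisely along a covering relation whose two boxes carry different long/short labels; as $\phi$ preserves labels and the only cominuscule types are at most doubly laced, $\psi$ preserves bond multiplicities (and the short end of each double bond). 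Hence $\psi$ is an isomorphism of Dynkin diagrams.

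It remains to show $\psi$ identifies reduced decompositions, i.e.\ that the induced isomorphism of Weyl groups sends $w_\lambda$ to $w_\mu$. By \Lemma{red_decomp} it suffices to establish the compatibility
\[
        \psi\circ\delta_X=\delta_Y\circ\phi\qquad\text{on all of }\lambda,
\]
for then the reduced word $s_{\delta(\beta_\ell)}\cdots s_{\delta(\beta_1)}$ read off a linear extension of $\lambda$ is carried termwise to the reduced word of $w_\mu$ attached to the linear extension $\phi(\beta_1)<\cdots<\phi(\beta_\ell)$ of $\mu$. I would prove this by strong induction along a linear extension of $\lambda$. For boxes in $\cP_X^\Delta$ the identity holds by construction of $\psi$. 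For $\beta\in\lambda\setminus\cP_X^\Delta$, the heap structure encoded by \Lemma{red_decomp} (each covering relation joins boxes whose $\delta$-values are adjacent and distinct in the Dynkin diagram) shows that $\delta_X(\beta)$ is a common neighbor in the tree $\cD_X^\lambda$ of the values $\delta_X(\eta)$ over the boxes $\eta\lessdot\beta$; applying $\psi$ together with the inductive hypothesis $\delta_Y(\phi(\eta))=\psi(\delta_X(\eta))$, both $\psi(\delta_X(\beta))$ and $\delta_Y(\phi(\beta))$ are common neighbors in $\cD_Y^\mu$ of the vertices $\psi(\delta_X(\eta))$.

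When $\beta$ covers at least two boxes, these are two or more distinct vertices of a tree, which have at most one common neighbor, so $\psi(\delta_X(\beta))=\delta_Y(\phi(\beta))$ and the step closes. The \textbf{main obstacle} is the case of a box $\beta\notin\cP_X^\Delta$ covering a single box $\eta$ (such boxes genuinely occur, e.g.\ in even quadrics), where the tree-theoretic uniqueness is unavailable: one must determine which neighbor of $\delta(\eta)$ equals $\delta(\beta)$. Here I would use the long/short labeling together with the rigid structure of cominuscule posets—the fact that a vertex of a cominuscule Dynkin diagram has at most one neighbor of the opposite length pins down $\delta(\beta)$ whenever $\beta$ and $\eta$ differ in length, while the same-length single-cover configurations must be controlled by a finer analysis of the $\delta$-fibers, i.e.\ of the global shape of $\cP_X$. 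Granting this, the compatibility identity holds on all of $\lambda$, so $\psi$ sends every reduced decomposition of $w_\lambda$ to one of $w_\mu$ and in particular $\psi(w_\lambda)=w_\mu$, completing the proof.
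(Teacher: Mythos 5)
Your construction of $\psi=\delta_Y\circ\phi\circ\delta_X^{-1}$ and the verification that it is a graph isomorphism follow the paper's proof essentially verbatim: the paper likewise restricts $\phi$ to $\lambda\cap\cP_X^\Delta\to\mu\cap\cP_Y^\Delta$ (using that this intersection is the union of Dynkin chains contained in $\lambda$), transports through $\delta$, and uses the fact that adjacency in $\cD_X$ corresponds to covering relations in $\cP_X^\Delta$ together with preservation of root lengths. Where you diverge is the final step. The paper dismisses it in one line ("the identification of reduced decompositions now follows from \Lemma{red_decomp}"), whereas you correctly observe that \Lemma{red_decomp} only yields the conclusion once one knows the compatibility $\psi\circ\delta_X=\delta_Y\circ\phi$ on \emph{all} of $\lambda$, not merely on $\lambda\cap\cP_X^\Delta$ where it holds by construction. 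Identifying that this is the actual content of the last step is a genuine improvement in precision over the paper's write-up.

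However, your proof of that compatibility is incomplete, and by your own admission: the inductive step fails exactly when a box $\beta\in\lambda\setminus\cP_X^\Delta$ covers a single box $\eta$ of the same length, since then $\delta_X(\beta)$ is only constrained to be \emph{some} neighbor of $\delta_X(\eta)$ in the tree $\cD_X^\lambda$, and neither the uniqueness of common neighbors nor the long/short labeling pins it down. This configuration genuinely occurs — e.g., the trailing boxes of $\cP_{Q^{2n-2}}$ beyond the diamond, and the long tail of $\cP_{E_7/P_7}$, consist of such boxes, all long. "Granting this" leaves the proposition unproved in precisely the cases that distinguish, say, which branch of a trivalent vertex the walk $\delta(\gamma_0),\delta(\gamma_1),\dots$ continues along. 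The missing ingredient is available: $w_\lambda$ is fully commutative, so in its heap two boxes with the same $\delta$-value must have at least two boxes with non-commuting (adjacent) labels strictly between them; since only $\eta$ lies strictly between $\beta$ and any earlier occurrence, $\delta_X(\beta)$ cannot equal the label of any box $\le\eta$, which rules out all neighbors of $\delta_X(\eta)$ already attained and forces $\delta_X(\beta)$ to be the unique "new" neighbor — an argument that transports through $\psi$ because $\phi$ preserves the covering structure. Without some such argument (or the root-theoretic computation in the style of the paper's proof of \Lemma{biject}), your induction does not close, so as written the proposal has a genuine gap.
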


\begin{proof}
        Let \(\lambda\subseteq\cP_X\) and \(\mu\subseteq\cP_Y\) be lower order ideals and \(f: \lambda\to\mu\) a labeled poset isomorphism. Note that \(f\) restricts to a labeled poset isomorphism \(f: \lambda\cap\cP_X^\Delta\to\mu\cap\cP_Y^\Delta\), since \(\lambda\cap\cP_X^\Delta\) is exactly the union of Dynkin chains contained in \(\lambda\). Define a map \(\psi_f: \Delta_X^\lambda\to\Delta_Y^\mu\) by \(\psi_f(\alpha)=\delta\circ f\circ\delta^{-1}(\alpha)\), where \(\delta^{-1}: \Delta_X\to\cP_X^\Delta\) is the inverse of \(\delta: \cP_X^\Delta\to\Delta_X\). Then \(\psi_f\) is a bijection.

        By the proof of \Lemma{biject}, simple roots \(\alpha\) and \(\beta\) are adjacent vertices in \(\cD_X\) if and only if either \(\delta^{-1}(\alpha)<\delta^{-1}(\beta)\) or \(\delta^{-1}(\beta)<\delta^{-1}(\alpha)\) is a covering relation. Since \(\delta\) preserves root lengths, the map \(\psi_f\) can be extended to a graph isomorphism \(\cD_X^\lambda\to\cD_Y^\mu\).

        The identification of reduced decompositions now follows from \Lemma{red_decomp}.
\end{proof}

\begin{proof}[Proof of \Proposition{converse}]
        Let $\gamma_X$ and $\gamma_Y$ denote the cominuscule simple roots corresponding to the cominuscule flag varieties $X$ and $Y$.  Let $X'$ and $Y'$ denote the cominuscule flag varieties given by the pairs \( (\cD_X^\lambda, \gamma_X)\) and \((\cD_Y^\mu, \gamma_Y)\).  \Proposition{group_restriction} implies the cominuscule flag varieties \(X'\) and \(Y'\) are isomorphic.  By \Lemma{isom}, this isomorphism restricts to an isomorphism between \(X_\lambda\) and \(Y_\mu\), upon identifying them with Schubert varieties in \(X'\) and \(Y'\), respectively.
\end{proof}

We illustrate the above process with
\Example{proof} and \Example{E6P5} below.

\begin{example}\label{example:proof}
        Let \(X = E_6/P_6\) and \(\lambda\) be the lower order ideal depicted on the left below. Then \(S(\lambda) = \{s_2, s_3, s_4, s_5, s_6\}\), where \(s_i\) is the simple reflection corresponding to the simple root labeled by \(i\). Therefore, the pair \((\cD_X^\lambda,\gamma)\) is as depicted on the right, isomorphic to that of \(Q^8\), showing \(X' \cong Q^8\), and \(X_\lambda\cong X'_{\lambda'}\), where \(\lambda'\) is the lower order ideal depicted on the right below.
        \begin{table}[h]
                \centering
                \begin{tabular}{c c c}
                        \(X = E_6/P_6\)                                                                                         &         & \(X'\)                                \\ & & \\
                        \begin{dynkinDiagram}[text style/.style={scale=0.9},labels={1,2,3,4,5,6},scale=2,backwards]
                                {E}{ooooo*}
                        \end{dynkinDiagram} &         &
                        \begin{dynkinDiagram}[text style/.style={scale=0.9},labels={6,5,4,2,3},scale=2]
                                {D}{*oooo}
                        \end{dynkinDiagram}
                        \\ & & \\
                        \(
                        \tableau{12}{
                        [AA]                                                                                                    & [AA]    & [AA]   & [AA]                         \\
                                                                                                                                &         & [AA]   & [AA] & [a] & [a]             \\
                                                                                                                                &         & [a]    & [a]  & [a] & [a]             \\
                                                                                                                                &         &        &      & [a] & [a] & [a] & [a]
                        }
                        \)                                                                                                      & $\cong$ &
                        \(\tableau{12}{
                        [AA]                                                                                                    & [AA]    & [AA]   & [AA]                         \\
                                                                                                                                &         & [AA]   & [AA] & [a] & [a]
                        }\)                                                                                                                                                       \\
                \end{tabular}
        \end{table}
\end{example}

\begin{example}\label{example:E6P5}
        Let \(X = E_6/P_6\) and \(\lambda\) be the lower order ideal depicted on the left below. Then \(S(\lambda) = \{s_1, s_3, s_4, s_5, s_6\}\), where \(s_i\) is the simple reflection corresponding to the simple root labeled by \(i\). Therefore, the pair \((\cD_X^\lambda,\gamma)\) is as depicted on the right, isomorphic to that of \(\P^5\), showing \(X_{\lambda} \cong X' \cong \P^5\).

        \begin{table}[h]
                \centering
                \begin{tabular}{c c c}
                        \(X = E_6/P_6\)                                                                                          &         & \(X'\)                                 \\ & & \\
                        \begin{dynkinDiagram}[text style/.style={scale=0.9},labels={1,2,3,4,5,6},scale=2,backwards]
                                {E}{ooooo*}
                        \end{dynkinDiagram} &         &
                        \begin{dynkinDiagram}[text style/.style={scale=0.9},labels={1,3,4,5,6},scale=2,backwards]
                                {A}{oooo*}
                        \end{dynkinDiagram}
                        \\ & & \\
                        \(
                        \tableau{12}{
                        [AA]                                                                                                     & [AA]    & [AA]   & [a]                           \\
                                                                                                                                 &         & [AA]   & [a]  & [a]  & [a]             \\
                                                                                                                                 &         & [AA]   & [a]  & [a]  & [a]             \\
                                                                                                                                 &         &        &      & [a]  & [a] & [a] & [a]
                        }
                        \)                                                                                                       & $\cong$ &
                        \(\tableau{12}{
                        [AA]                                                                                                     & [AA]    & [AA]   & [AA] & [AA]
                        }\)                                                                                                                                                         \\
                \end{tabular}
        \end{table}
\end{example}

\bibliography{comin_FPSAC.bib}
\end{document}